\DeclareMathOperator{\sgn}{sgn}
\DeclareMathOperator{\Stab}{Stab}
\newcommand{\dimL}{\mathrm{dim}\,L_\lambda}
\newcommand{\su}{\mathfrak{su}}
\newtheorem{theorem}{Theorem}[section]
\newtheorem{lemma}[theorem]{Lemma}
\newtheorem{corollary}[theorem]{Corollary} 
\theoremstyle{definition}
\theoremstyle{remark}
\title{The high-dimension limit of characters of compact reductive Lie groups and restrictions on the production of quantum randomness}
\author[1]{Piotr Borodako kf}
\author[2]{Adam Sawicki}
\affil[1]{The College of Interfaculty Individual Studies in Mathematics and Natural Science, University of Warsaw}
\affil[2]{Center for Theoretical Physics, Polish Academy of Sciences,
Al. Lotników 32/46, 02-668 Warszawa, Poland}
\date{\today}
\begin{document}

\maketitle

\begin{abstract}
\noindent
For any element $g$ of compact reductive group $G$, we investigate the asymptotic behavior of its normalized irreducible character in the high-dimension limit, $\frac{\chi_\lambda(g)}{d_\lambda}$. We show that when $G$ is simple the limit vanishes besides identity element. For semisimple groups, one gets the same results under the  additional assumption that dimensions of irreducible representations of all simple components are going to infinity.

We apply this result to the study of pseudorandom quantum evolution. This lets us establish a bound on the generation of the approximate t-designs that do not depend on the underlying symmetry group but only on the number of the generators used. The limiting spectral distribution turns out to be governed by the Kesten-McKay law.


\end{abstract}

\section{Introduction and motivations}
\label{sec:intro}

Approximate $t$-designs are collections of unitary operators that mimic the Haar measure by reproducing, up to order $t$, the averages of polynomial functions in the matrix elements, though only approximately. A connection between such designs and $\varepsilon$-nets was recently clarified in \cite{nets}. Although constructions of exact $t$-designs are known \cite{Yoshifumi2}, implementing them on current quantum devices is hindered by unavoidable noise and control imperfections, which in practice turns them into approximate $t$-designs. These structures are widely used across quantum information science: examples include randomized benchmarking \cite{Gambetta2014}, efficient learning and estimation of quantum states \cite{EffLearning2020}, decoupling protocols \cite{Decoupl2013}, quantum communication tasks \cite{InfTransmission2009}, state discrimination problems \cite{StateDiscrimination2005}, characterizing universal gate sets \cite{Sawicki22}, and studying the growth of complexity in quantum systems \cite{Suskind2018,ChaosDesign2017,MO1,jonas2}. 

The question of how to efficiently realize pseudo-random unitaries has recently attracted significant attention. Early progress in \cite{HArrowHasstings2008,HH09} demonstrated that Haar-random subsets of $U(d)$ already form good $t$-designs for $t = O(d^{1/6}/\log d)$. Later, it was shown in \cite{BHH2016} that random quantum circuits consisting of Haar-random two-qubit gates, arranged with a prescribed architecture and polynomial depth in the number of qubits $N$, generate approximate $t$-designs. Subsequent advances \cite{Harrow2018,Haferkamp2022randomquantum,PhysRevA.104.022417,Yoshifumi1} established even faster convergence with increasing system size. In another direction, \cite{Qhomeopathy2020} proved that circuits built mainly from Clifford gates, supplemented by a small number of non-Clifford gates, can efficiently approximate $t$-designs. Research concerning global bounds remains highly active; recently, progress was made in establishing simple, linear depth circuit construction \cite{Metger2024} and highly efficient designs generated from random sums and permutations \cite{Chen2024}. Nonetheless, numerical and theoretical evidence in \cite{OLIVIERO2021127721,Leone2021quantumchaosis} suggests that achieving full $t$-design behavior in realistic settings often demands a substantial number of non-Clifford resources.


When designing a practical implementation, one is inevitably led to approximate t-designs, raising the critical question of how to quantify their quality. Directly verifying the design properties of a given ensemble is known to be computationally demanding \cite{Nakata2025}. The answer to this question requires investigating the norm of so-called moment operators. Useful lower bounds on such norms can be obtained by studying normalized characters in high-dimensional representations, i.e., the asymptotic behavior of the quantity 
\begin{equation}
    \frac{\chi_\lambda(g)}{\mathrm{dim}V_\lambda}.
\end{equation}
The purpose of this work is to rigorously analyze this limit for any reductive compact Lie group. The main result of our paper is Theorem \ref{cor:critdiv}.

The article is structured as follows. In Section \ref{sec:wcf}, we briefly outline the theoretical tools employed throughout our proof. In Section \ref{sec:su2_example}, we provide an illustrative calculation for the SU$(2)$ case, where the validity of our thesis can be immediately seen. This elementary example serves as the primary motivation for our hypothesis that the result holds in general.

The core of our analysis is presented in Section \ref{sec:analysis}, which is divided into two parts. The first part Section \ref{sec:su3_case_study} is a detailed discussion of the SU$(3)$ case; it is a rich special case that contains most of the core difficulties of the general problem, yet all of its objects can be written out directly. The second part of Section \ref{sec:sun_proof} concerns the general SU$(N)$ case and focuses on the formal generalization of the insights gained from SU$(3)$. There, we prove the phenomena observed explicitly in the specific case and provide the theoretical reasons why they hold in general. In Section \ref{sec:discussion}, we discuss the interpretation of the proof. We investigate the geometric meaning of the formulas used and the structure of the mathematical objects that emerge in the limit. Then, in Section \ref{sec:generalization}, we present a generalization of the result for all compact reductive Lie groups. Finally, in Section \ref{sec:quantumpart}, we discuss the implications of our findings for pseudo-random unitaries.


We note that results similar to ours were published in the past in \cite{SizeCharactersCompact1991}. However, the proof there is purely analytical, based on constructing an upper-bound, contrary to our direct and constructive algebraic approach. Thanks to it we can understand that obtained formulas have a clear algebraic and geometric meaning that turns out to relate to the centralizer of the considered element. Thanks to this insight, one is able to extend the reasoning conducted in Section \ref{sec:analysis} to all reductive compact algebras in Section \ref{sec:generalization} leading to a sharp criterion in Theorem \ref{cor:critdiv} that precisely delineates when the character vanishes. 

The crux of our proof is Lemma \ref{lemma:divergence} establishing the divergence of a product over non-degenerate roots. Our proof of this lemma is constructive, providing a method based on the connectivity of the Dynkin diagram that shows exactly how to find the root responsible for the divergence. This explicitness might be of use for further research on the topic.

In order to give an interpretation to our results one can think of a quantum state moving under the symmetry of a compact group. Each step of the random walk applies one of the generators from a fixed set, and averaging over these steps describes how randomness spreads across the Hilbert space. The spectral gap of this averaging operator tells us how quickly the process “forgets” its initial state and approaches true randomness.

Our result shows a kind of universality: no matter which compact symmetry group is chosen, once the representation is large the spectral behavior always reduces to the same distribution, known from random walks on infinite trees (the Kesten–McKay law). This universality has a practical message for quantum information: the efficiency of generating  unitary t-designs cannot exceed a fixed bound. The only thing that matters is the number of random generators one uses. In short, symmetry cannot accelerate the production of quantum randomness beyond this universal speed limit.

\section{Theoretical Framework: The Weyl Character Formula}
\label{sec:wcf}

When studying the representation theory of Lie algebras, we often limit our attention to non-abelian algebras which do not contain any non-trivial ideals—we call them simple, and algebras that are a direct sum of them, which we call semi-simple. The theory of representations of these algebras is particularly well-behaved due to Weyl's Theorem on Complete Reducibility \cite{kirillovIntroductionLieGroups2017}, which states that every finite-dimensional representation of a semi-simple Lie algebra is a direct sum of irreducible representations.

In the following, we will assume that $\mathfrak{g}$ is a simple Lie algebra. The key to understanding $\mathfrak{g}$ is looking at its maximal abelian subalgebra, called the \textbf{Cartan Subalgebra} (CSA), $\mathfrak{h}$. It is a foundational result that every element of $\mathfrak{g}$ is conjugate to an element of $\mathfrak{h}$ under the Adjoint action of the corresponding Lie group.

We can now perceive the whole algebra $\mathfrak{g}$ as a linear space on which our chosen CSA acts via the adjoint representation,
\begin{equation}
    \mathrm{ad}: \mathfrak{g} \to \mathfrak{gl}(\mathfrak{g}), \quad \mathrm{ad}(X)(Y) = [X,Y].
\end{equation}

As the acting algebra is abelian, its representation consists of a set of commuting matrices. For a compact real form like $\su(N)$, these matrices can be simultaneously diagonalized, and thus they decompose our Lie algebra into invariant subspaces on which $\mathfrak{h}$ acts by scalar multiplication. The number by which we multiply is linearly dependent on the element of $\mathfrak{h}$ and is thus a functional on the Cartan subalgebra. We call these non-zero functionals \textbf{roots} ($\alpha$). The one-dimensional subspaces on which they describe the action of the Cartan Subalgebra are the \textbf{root spaces} ($\mathfrak{g}_\alpha$). One can see that
\begin{equation}
    \mathfrak{g} = \mathfrak{h} \oplus \bigoplus_{\alpha \in R} \mathfrak{g}_\alpha.\end{equation}

We consider the subgroup of the Cartan subalgebra automorphisms generated by the maps
\[
h \mapsto \mathrm{Ad}_{\exp(g)} h \quad \text{for } h \in \mathfrak{h} \text{ and } g \in \mathfrak{g}_\alpha,
\]
and call this group the \textbf{Weyl group}.

For $\su(N)$, the Cartan subalgebra is the set of traceless, purely imaginary diagonal matrices. Denoting by $L_i$ the functional that gives the $i$-th element of the diagonal, we can determine that roots have the form $L_i-L_j$. The reflection generated from a particular $\alpha$ , i.e.
\begin{equation}
s_{\alpha}(h) = h - \frac{2(h|\alpha)}{(\alpha|\alpha)}\alpha = h - \frac{2(h,L_i-L_j)}{(L_i-L_j|L_i-L_j)}(L_i-L_j) = h - (h,L_i-L_j)(L_i-L_j)
\end{equation}
acts by transposing the $i$-th and $j$-th elements of the diagonal. Here by inner product we understand the standard inner product on forms in a basis given by our choice of Cartan Subalgebra. In what follows we will define the inner product in general by the Killing form. It is a fact, that the Weyl group is generated by these reflections. Therefore, our Weyl group is the permutation      group of the diagonal entries, $S_N$.

We can consider the action of the Cartan Subalgebra on an arbitrary representation $V$. Repeating the arguments given before, we arrive at the conclusion that for an arbitrary representation $V$,
\[
V = \bigoplus_{\mu \in \mathfrak{h}^*} V_\mu
\]
where for $h \in \mathfrak{h}$ and $v \in V_\mu$, we have $h \cdot v = \mu(h)v$. The functionals $\mu$ for which $V_\mu \neq \{0\}$ are called the \textbf{weights} of the representation. Knowing the weights lets us easily deduce the action of the rest of the algebra on the representation.

It turns out that the weights of a representation exhibit a deep geometric structure. To introduce geometry into our, to this moment, purely algebraic considerations, we have to introduce a scalar product—the Killing form, defined by:
\[
K(X,Y) := \mathrm{Tr}(\mathrm{ad}(X) \circ \mathrm{ad}(Y)).
\]
The most important features of the Killing form are:
\begin{itemize}
    \item It is a symmetric, bilinear form on the Lie algebra.
    \item It is Adjoint-invariant, i.e., $K([X,Y],Z) = K(X,[Y,Z])$.
    \item It is non-degenerate if and only if the considered algebra is semi-simple.
    \item For a simple algebra, it is the unique (up to a scalar) Adjoint-invariant, symmetric bilinear form.
\end{itemize}
This last property lets us quickly determine the form of our inner product. For example, for $\su(N)$ we postulate the form given by $\mathrm{Tr}(\pi(X)\pi(Y))$, where $\pi$ is the defining (N-dimensional) representation. It is easily seen that it is symmetric, non-degenerate, and Adjoint-invariant. Thus, it is a Killing form up to some constant in which we are not interested. Throughout this article, we will denote this form, evaluated on lie algebra elements $X,Y$ by $-(X|Y)$, and the norm generated by it by $\|X\|^2 = -(X|X)$\footnote{The minus is introduced to make our form positive-definite}. We will assume that the proportionality constants are such that the form is positive definite.

General algebraic considerations yield a result that possible representation weights are those $\lambda$ which fulfill the condition
\begin{equation}
    \label{eq:intcond}
    \frac{2 (\lambda|\alpha)}{(\alpha|\alpha)} \in \mathbb{Z}
\end{equation}

for all roots $\alpha$. For $\su(N)$, where we can write $\lambda=(\lambda_1, \dots, \lambda_N)$, the condition in Eq. \eqref{eq:intcond} for $\alpha=L_i-L_j$ reads $\lambda_i - \lambda_j \in \mathbb{Z}$. Applying the additional condition that $\sum_i \lambda_i = 0$, we find that allowed weights have the form
\[
\frac{1}{N}(m_1, \dots, m_N) \quad \text{where } m_i \in \mathbb{Z} \text{ and } \sum_i m_i = 0.
\]
Irreducible representations are classified by a \textbf{highest weight} $\lambda$, which is defined as the weight maximizing some linear functional $l$ on the space of weights. We choose $l$ to be irrational with respect to the root lattice to ensure the uniqueness of the highest weight. The weights $\alpha$ that give a positive number when evaluated on $l$ are called \textbf{positive roots} and are denoted by $R_+$. We define \textbf{simple roots} as those positive roots which are not expressible as a sum of other ones. The highest weight space of an irreducible representation is always one-dimensional.

The character $\chi(V)$ of a representation is defined abstractly by $\chi(V) = \sum_\mu (\dim V_\mu)e^\mu$. Using the result that for a compact group every element $g$ is conjugate to an element $e^{ih_{alg}}$ of the maximal torus, where $i h_{alg}$\footnote{It is important to note, that $h_{alg}$ is not an element of the Cartan subalgebra $\mathfrak{g}$ - it is only when multiplied by $i$. One shall note, that we need this $i$ to introduce an isomorphism betweeen the imaginary-valued evaluations and the real-valued Killing form} is in the Cartan subalgebra $\mathfrak{g}$, we may calculate the trace of the matrix that represents $g$, also called the character of $g$:
\[
\mathrm{Tr}(\pi(g)) = \sum_\mu (\dim V_\mu)e^{i\langle \mu, h_{alg} \rangle}.
\]
The character of an irreducible representation with highest weight $\lambda$ is given by the Weyl Character Formula (WCF):
\begin{equation}
    \chi_\lambda(e^{ih_{alg}}) = \frac{\sum_{w \in W} \sgn(w) e^{i\langle w(\lambda + \rho),h_{alg} \rangle}}{\prod_{\alpha \in R_+} (e^{i\langle\alpha,h_{alg}\rangle/2} - e^{-i\langle\alpha,h_{alg}\rangle/2})},
\end{equation}
where $\rho := \frac{1}{2}\sum_{\alpha \in R_+} \alpha$ is the Weyl vector. The Killing form allows us to identify $\mathfrak{h}$ with its dual $\mathfrak{h}^*$, so we can write $\langle \mu, h_{alg} \rangle = (\mu|h)$, where $h$ is the element in $\mathfrak{h}^*$ corresponding to $h_{alg}$. Using the Weyl group invariance of the Killing form, we arrive at:

\begin{equation}
    \label{eq:wcf}
    \chi_\lambda(e^{ih}) = \frac{\sum_{w \in W} \sgn(w) e^{i(\lambda + \rho|w h)}}{\prod_{\alpha \in R_+} (e^{i(\alpha|h)/2} - e^{-i(\alpha|h)/2})}.
\end{equation}
From the WCF, we can derive a formula for the dimension of an irreducible representation by taking the limit $h \to 0$:
\begin{equation}
    \label{eq:dimension_formula}
    \dim V_{\lambda} = \prod_{\alpha \in R_+} \frac{(\lambda + \rho|\alpha)}{(\rho|\alpha)}.
\end{equation}

\section{A Motivating Example: The SU$(2)$ Limit}
\label{sec:su2_example}

To motivate our thesis, we will now consider a known\cite{dulianRandomMatrixModel2024} result that it is fulfilled for SU$(2)$ . 

We are interested in the limit of the quantity $\chi_l(\theta)/d_l$. We know that the weight space for the spin-$l$ representation of SU$(2)$ consists of the $2l+1$ states $\{ -l, -l+1, \dots, l \}$. Therefore, our character comes out to be the sum over these weights:
\begin{equation}
    \chi_j(\theta) = \sum_{m=-l}^{l} e^{im\theta}.
\end{equation}
This is a geometric series, which we can sum explicitly:
\begin{equation}
    \sum_{m=-l}^{l} (e^{i\theta})^m = e^{-il\theta} \sum_{k=0}^{2l} (e^{i\theta})^k = e^{-il\theta} \frac{e^{i\theta(2l+1)}-1}{e^{i\theta}-1}.
\end{equation}
By factoring out phases from the numerator and denominator, this simplifies to the well-known closed form:
\begin{equation}
    \chi_l(\theta) = \frac{\sin\left((l+\frac{1}{2})\theta\right)}{\sin(\frac{1}{2}\theta)}.
\end{equation}
One can see it directly just by writing down the WCF for this special case:

\begin{equation}
    \chi_\lambda(e^{ih}) = \frac{\sum_{w \in W} \sgn(w) e^{i(w(\lambda + \rho)|h)}}{ e^{i(\alpha|h)/2} - e^{-i(\alpha|h)/2} } = \frac{e^{i(l+1/2)\theta} - e^{-i(l+1/2)\theta}}{e^{i\theta/2} - e^{-i\theta/2}} = \frac{\sin\left((l+\frac{1}{2})\theta\right)}{\sin(\frac{1}{2}\theta)}.
\end{equation}

Where $h$ is an element dual to $\theta$.
For a fixed $\theta \neq 0$, we see that this value is bounded, as the sine functions oscillate. The dimension of our representation is $d_l = 2l+1$. Therefore, the normalized character behaves in the high-dimension limit ($l \to \infty$) as:
\begin{equation}
    \frac{\chi_l(\theta)}{d_l} \sim \frac{\text{Bounded Constant}}{2l+1}.
\end{equation}
This ratio clearly vanishes as $l^{-1}$. The only exception is, of course, the trivial element ($\theta = 0$), where the normalized character is always 1.

This simple case demonstrates that the normalized character vanishes in the high-dimension limit for all non-trivial group elements, a phenomenon we will investigate in section \ref{sec:analysis} for SU$(N)$ and in section \ref{sec:generalization} for general compact, reductive groups.

\section{Asymptotic Analysis at Singular Elements}
\label{sec:analysis}

The Weyl Character Formula in the form of Eq.~\eqref{eq:wcf} holds for group elements that do not lead to a zero in the denominator; we shall call such elements \textbf{regular}. For any fixed regular element, it is not hard to see that the normalized character vanishes in the high-dimension limit. The denominator of Eq. \eqref{eq:wcf} is independent of the representation $\lambda$, while the numerator is a finite sum of complex exponentials and is thus bounded. Since the dimension of the representation grows polynomially with $\lambda$, the ratio must converge to zero.

The entire difficulty of the proof, therefore, is to show that the same conclusion holds for \textbf{singular} (non-regular) elements, for which the formula becomes an indeterminate $0/0$ form. Resolving this is the primary technical challenge of this work.

\subsection{Case Study: The SU$(3$) Character at a Singular Point}
\label{sec:su3_case_study}

In this section, we provide a rigorous derivation of the character of an irreducible representation of SU$(3)$ at a singular element of its maximal torus. This serves as an illustration of the more abstract reasoning that will be provided later for the general case. By resolving the indeterminate form of the Weyl Character Formula (WCF) via a limiting procedure, we demonstrate that the character is a finite sum of terms, where each term's magnitude is proportional to the dimension of an effective SU$(2)$ sub-representation. We then prove that the normalized character vanishes in the high-dimension limit by performing an exact algebraic cancellation.

\begin{theorem}
For any fixed singular element $g \neq I$ in SU$(3)$, corresponding to a Cartan element $h_0 \neq 0$, the normalized character vanishes in the high-dimension limit:
\[
\lim_{\lambda \to \infty} \frac{\chi_\lambda(g)}{\dimL} = 0.
\]
\end{theorem}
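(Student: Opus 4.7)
The plan is to resolve the $0/0$ indeterminate form in the Weyl Character Formula \eqref{eq:wcf} by a perturbative limit along a generic direction, and then compare the resulting polynomial growth in $\lambda$ to that of the dimension formula \eqref{eq:dimension_formula}. First I would identify the set of positive roots $S = \{\alpha \in R_+ : (\alpha|h_0) \in 2\pi\mathbb{Z}\}$ responsible for the denominator vanishing. Since the three positive roots of $\su(3)$ satisfy $\alpha_1 + \alpha_2 = \alpha_3$, the subset $S$ must be either empty (the regular case), a singleton, or all of $R_+$; the latter corresponds precisely to central group elements, and must be excluded by a non-centrality assumption or handled separately. In the generic singular case $|S| = 1$, I call the vanishing root $\alpha$ and note that its reflection $s_\alpha$ generates the $W$-stabilizer $W_0 = \{e, s_\alpha\}$ of $h_0$ modulo the coroot lattice, so that $e^{i(\lambda+\rho|wh_0)}$ is constant on each $W_0$-coset.

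Next I would substitute $h = h_0 + t k$ into \eqref{eq:wcf} for a direction $k$ with $(\alpha|k) \neq 0$, and expand both numerator and denominator to first order in $t$. Grouping the Weyl sum in the numerator by cosets of $W_0$, within each coset $\{w, w s_\alpha\}$ the $t^0$ contributions cancel (by the $W_0$-invariance of $e^{i(\lambda+\rho|wh_0)}$ combined with opposite Weyl signs), and what survives is a first-order term proportional to $t\,(\lambda+\rho|w\alpha)(\alpha|k)$. The denominator of \eqref{eq:wcf} has exactly the single vanishing factor $2i\sin((\alpha|h)/2) \sim \pm i t (\alpha|k)$, while the other two factors remain bounded away from zero. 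Passing to $t \to 0$ cancels the $k$-dependence and yields
\[
\chi_\lambda(e^{ih_0}) = C(h_0) \sum_{[w] \in W/W_0} \sgn(w)\, e^{i(\lambda+\rho|wh_0)}\,\frac{(\lambda+\rho|w\alpha)}{\prod_{\beta \in R_+\setminus\{\alpha\}} 2i\sin((w\beta|h_0)/2)},
\]
a sum of $|W/W_0| = 3$ terms, each of magnitude linear in $\lambda$.

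To finish, I compare with $\dimL = \prod_{\alpha' \in R_+}(\lambda+\rho|\alpha')/(\rho|\alpha')$, a polynomial of degree $|R_+| = 3$ in $\lambda$. The normalized character $\chi_\lambda(e^{ih_0})/\dimL$ is then bounded by a constant divided by the product $(\lambda+\rho|\alpha_a)(\lambda+\rho|\alpha_b)$ over the two non-vanishing positive roots, and so tends to $0$ as $\lambda \to \infty$ in the dominant chamber. The main obstacle is the algebraic bookkeeping in the second step: making the L'H\^{o}pital-style pairing $w \leftrightarrow w s_\alpha$ precise, tracking the $t^0$ cancellation together with the surviving $t^1$ coefficient on both sides, and verifying that the $k$-dependent constants cancel cleanly between numerator and denominator. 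Conceptually, one expects to recognise the surviving factor $(\lambda+\rho|w\alpha)$ as the dimension of an effective SU$(2)$ sub-representation associated with the $\su(2)$ generated by $\alpha$, matching the picture advertised in the section's preamble.
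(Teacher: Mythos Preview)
Your approach is essentially the paper's: perturb $h_0$, factor the Weyl sum over cosets of $W_0=\{e,s_\alpha\}$, extract from each coset a first-order contribution proportional to $(\lambda+\rho\,|\,w\alpha)$, and compare this linear growth against the cubic growth of $\dim L_\lambda$. The only presentational difference is that the paper restricts the perturbation $\delta$ to $\mathrm{span}(\alpha_1)$ so that the singular quotient is recognised directly as an $\su(2)$ Weyl character formula whose $\delta\to 0$ limit is the dimension of an effective sub-representation, whereas you Taylor-expand along a generic direction $k$ and note the $\su(2)$ interpretation only at the end.
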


\begin{proof}

We begin with the Weyl Character Formula, Eq. \eqref{eq:wcf}. This formula holds for any regular element $h$. We will denote the positive roots of SU$(3)$ by $\alpha_1 = L_1 - L_2$, $\alpha_2 = L_2 - L_3$, and $\alpha_1 + \alpha_2 = L_1 - L_3$. The Weyl group is the permutation group of three elements, $S_3$. The Weyl vector is $\rho_{\su(3)} = \alpha_1 + \alpha_2 = (1,0,-1)$. $\alpha_1$ and $\alpha_2$ are the simple roots.

We now address the degenerate case, where the denominator is zero. This occurs when $h$ lies in a hyperplane orthogonal to some root $\alpha$. We consider a singular element $h_0$ such that $(\alpha_1|h_0) = 0$ but $(\alpha|h_0) \neq 0$ for $\alpha \in \{\alpha_2, \alpha_1+\alpha_2\}$. As our $\alpha_1 = L_1 - L_2$, this means exactly that $h_0$ has the following form:
\[
h_0 = \begin{pmatrix}
    a & 0 & 0 \\
    0 & a & 0 \\
    0 & 0 & b
\end{pmatrix},
\]
where $2a+b=0$ and $a \neq b$.

To evaluate the character at $h_0$, we consider a nearby regular point $h = h_0 + \delta$ and take the limit $\delta \to 0$. The WCF for $h$ reads:
\begin{equation}
    \chi_\lambda(e^{i(h_0+\delta)}) = \frac{1}{\prod_{\alpha \in \{\alpha_2, \alpha_1+\alpha_2\}} \left(e^{i(\alpha|h_0+\delta)/2} - e^{-i(\alpha|h_0+\delta)/2}\right)} \cdot \frac{\sum_{w \in W} \sgn(w) e^{i(\eta|w(h_0+\delta))}}{2i\sin\left(\frac{1}{2}(\alpha_1|\delta)\right)}
\end{equation}

where we have introduced $\eta := \lambda + \rho_{\su(3)}$. We can understand this degeneracy by looking at the action of the Weyl group. The degeneracy with respect to $\alpha_1$ means that the transposition of the first and the second element keeps $h_0$ the same. We can look at it from a geometrical point of view. The reflection of $h_0$ in the plane perpendicular to $\alpha_1$ is given by
\[
s_{\alpha_1}(h_0) = h_0 - \frac{2(h_0|\alpha_1)}{(\alpha_1|\alpha_1)}\alpha_1.
\]
As $(h_0|\alpha_1)$ is 0, we see that $h_0$ is a fixed point of this reflection. The subgroup of the Weyl group that leaves $h_0$ invariant is the stabilizer, $W_0 = \{e, s_{\alpha_1}\}$. We can isolate this stability by factoring the numerator sum over the cosets $b \in W/W_0$. A set of coset representatives is $\{e, s_{\alpha_1}, s_{\alpha_2}s_{\alpha_1}\}$. The factorization is:
\begin{equation}
    \sum_{w \in W} \sgn(w) e^{i(\eta|w(h_0+\delta))} = \sum_{b \in W/W_0} \sgn(b) e^{i(\eta|bh_0)} \left( \sum_{\sigma \in W_0} \sgn(\sigma) e^{i(\eta|b\sigma\delta)} \right).
\end{equation}
We analyze the singular part of the expression for each term $b$ by conjugating the stabilizer:
\begin{equation}
    \sum_{\sigma \in W_0} \sgn(\sigma) e^{i(\eta|b\sigma\delta)} = \sum_{\sigma \in bW_0b^{-1}} \sgn(\sigma) e^{i(\eta|\sigma b\delta)} = \sum_{\sigma \in \{e, s_{b\alpha_1}\}} \sgn(\sigma) e^{i(\eta|\sigma b\delta)}.
\end{equation}
If we assume that $\delta\in \text{span}(\alpha_1)$, we can interpret the ratio of this sum to the singular part of the denominator as a WCF for an SU$(2)$ sub-algebra given as a subspace $\mathfrak{g}_{b\alpha_1} \oplus \mathfrak{g}_{-b\alpha_1} \oplus [\mathfrak{g}_{b\alpha_1},\mathfrak{g}_{-b\alpha_1}] $  associated with the root $b\alpha_1$:
\begin{equation}
    \frac{\sum_{\sigma \in \{e, s_{b\alpha_1}\}} \sgn(\sigma) e^{i(\lambda + \rho_{\mathfrak{su}(3)}|\sigma b\delta)}}{2i\sin\left(\frac{1}{2}(b\alpha_1|\delta)\right)} = \frac{\sum_{\sigma \in \{e, s_{b\alpha_1}\}} \sgn(\sigma) e^{(\lambda'_b+\rho_{\su(2)}|\sigma b\delta)}}{2i\sin\left(\frac{1}{2}(b\alpha_1|\delta)\right)}
\end{equation}
where both exponents have to be equal, $\lambda'_b \in \text{span} (b\alpha_1)$ and $\rho_{\su(2)}=\frac{1}{2}b\alpha_1$. Thus the effective highest weight $\lambda'_{b}$\footnote{The fact that $\lambda'_b$ is a valid weight follows from the construction of the sub-representation, and will be proven explicitly in general in the next section in the lemma \ref{lemma:weights}.} of this sub-representation is defined by the condition:

\begin{equation}
    \label{eq:effective_weight}
    \lambda'_{b} + \rho_{\su(2)} = \frac{(\lambda + \rho_{\su(3)}|b\alpha_1)}{(b\alpha_1|b\alpha_1)} b\alpha_1.
\end{equation}
 In the limit $\delta \to 0$, this ratio gives the dimension of this sub-representation, $\dim L_{\lambda'_{b}}$:
\begin{equation}
    \frac{\sum_{\sigma \in \{e, s_{b\alpha_1}\}} \sgn(\sigma) e^{i(\eta|\sigma b\delta)}}{2i\sin\left(\frac{1}{2}(b\alpha_1|\delta)\right)} \xrightarrow{\delta \to 0} \dim L_{\lambda'_{b}}.
\end{equation}

Taking the limit of the full expression, we obtain the exact character at $h_0$:
\begin{equation}
    \chi_\lambda(h_0) = \frac{1}{\prod_{\alpha \in \{\alpha_2, \alpha_1+\alpha_2\}} \left(e^{i(\alpha|h_0)/2} - e^{-i(\alpha|h_0)/2}\right)} \sum_{b \in W/W_0} \sgn(b) e^{i(\eta|b h_0)} \dim L_{\lambda'_{b}}.
\end{equation}
To analyze the limit $||\lambda|| \to \infty$, we consider the ratio $|\chi_\lambda(h_0)|/\dimL$. By the triangle inequality, this is bounded by a finite sum of positive terms. The limit of this sum is zero if the limit of each term's ratio is zero. We therefore analyze the ratio for a representative term, $b=e$:

From Eq. (\ref{eq:dimension_formula}) we know that the exact dimension of the sub-representation is 
\begin{equation}
    \mathrm{dim}\,L_{\lambda'_{e}} = \frac{(\lambda' + \rho_{\su(2)}|\alpha_1)}{(\rho_{\su(2)}|\alpha_1)}= \frac{(\lambda + \rho_{\su(3)}|\alpha_1)}{(\rho_{\su(2)}|\alpha_1)}
\end{equation}

where we have used \ref{eq:effective_weight}. The ratio of this dimension to that of the full representation is:

\begin{align}
    \frac{\mathrm{dim}\,L_{\lambda'_{e}}}{\mathrm{dim}\,L_\lambda} 
    &= \frac{(\lambda + \rho_{\su(3)}|\alpha_1)}{(\rho_{\su(2)}|\alpha_1)} \left( \prod_{\alpha \in R_+} \frac{(\lambda+\rho_{\su(3)}|\alpha)}{(\rho_{\su(3)}|\alpha)} \right)^{-1} \nonumber \\[\jot]
    &= \frac{(\rho_{\su(3)}|\alpha_1)}{(\rho_{\su(2)}|\alpha_1)} \cdot \frac{(\rho_{\su(3)}|\alpha_2)(\rho_{\su(3)}|\alpha_1+\alpha_2)}{(\lambda+\rho_{\su(3)}|\alpha_2)(\lambda+\rho_{\su(3)}|\alpha_1+\alpha_2)}. \label{eq:ratio_limit_rigorous}
\end{align}

The numerator of this final expression in Eq. \eqref{eq:ratio_limit_rigorous} is a constant that depends only on the fixed Weyl 
vectors $\rho_{\su(3)}$ and $\rho_{\su(2)}$. The denominator is a product of terms that grow linearly with $\lambda$. 
Therefore, the ratio vanishes as $\lambda \to \infty$. Since every term in the bounding sum 
vanishes, the proof is complete.

\end{proof}

\subsection{The General Case: SU$(N)$ Characters}
\label{sec:sun_proof}

We now extend our analysis to the general case of SU$(N)$. The first modification with respect to the SU$(3)$ case is that our stabilizer is a subgroup generated by reflections in all degenerate roots. Let $h_0$ be a singular element in the Cartan subalgebra, let $R_{+, \text{deg}}$ be the set of positive roots orthogonal to $h_0$, and let $R_{+, \text{ndeg}}$ be the set of non-degenerate positive roots. We evaluate the character at $h = h_0 + \delta$ and take the limit $\delta \to 0$. The Weyl Character Formula separates into regular and singular parts:
\begin{equation}
    \chi_\lambda(e^{ih}) = \frac{1}{\prod_{\alpha \in R_{+, \text{ndeg}}} \left(e^{i(\alpha|h_0)/2} - e^{-i(\alpha|h_0)/2}\right)} \cdot \frac{\sum_{w \in W} \sgn(w) e^{i(\eta|w(h_0+\delta))}}{\prod_{\alpha \in R_{+, \text{deg}}} \left(e^{i(\alpha|\delta)/2} - e^{-i(\alpha|\delta)/2}\right)},
\end{equation}
where $\eta := \lambda + \rho_{\su(N)}$. As for the SU$(3)$ case, we factorize our sum over the stabilizer $W_0 = \Stab_W(h_0)$:
\begin{equation}
    \sum_{w \in W} \sgn(w) e^{i(w\eta|h_0+\delta)} = \sum_{b \in W/W_0} \sgn(b) e^{i(\eta|b h_0)} \left( \sum_{\sigma \in W_0} \sgn(\sigma) e^{i(\eta|b\sigma \delta)} \right).
\end{equation}
As in \ref{sec:su3_case_study}, we consider only the singular part of our formula for each coset representative $b$. After conjugating the stabilizer, this part is:
\begin{equation}
    \label{eq:effchar}
    \frac{\sum_{\sigma \in bW_0b^{-1}} \sgn(\sigma) e^{i(\eta|\sigma b\delta)}}{\prod_{\alpha \in bR_{+, \text{deg}}} \left(e^{i(\alpha|b\delta)/2} - e^{-i(\alpha|b\delta)/2}\right)}.
\end{equation}
To interpret Eq. \eqref{eq:effchar} as the WCF for an effective representation of a subgroup whose root system is $bR_{\text{deg}}$ we have to assume that $\delta$ in the span of the degenerate roots and need the following

\begin{lemma}
\label{lemma:weights}
An effective weight $\lambda'$, which fulfills
$$
\lambda' + \rho_b' = \pi_{bR_{deg}}(\lambda + \rho),
$$
where $\lambda$ is an integral weight, $\pi_{bR_{deg}}$ is the orthogonal projection onto the subspace spanned by the effective roots $bR_{deg}$, and $\rho_b'$ is the Weyl vector corresponding to the effective representation, belongs to the weight lattice of the effective representation.
\end{lemma}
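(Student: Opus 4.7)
The plan is to verify the integrality condition $2(\lambda'|\alpha)/(\alpha|\alpha) \in \mathbb{Z}$ for every $\alpha \in bR_{deg}$, since this is precisely the defining property of membership in the weight lattice of the effective root system. The argument reduces to an elementary property of orthogonal projection combined with the standard fact that $\rho$ is itself a weight.

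First I would record the geometric identity that inner products with a vector lying inside the target subspace of an orthogonal projection are invariant under that projection: for every $v$ in the ambient Cartan and every $\alpha \in bR_{deg} \subset \text{span}(bR_{deg})$,
\begin{equation}
(\pi_{bR_{deg}}(v)|\alpha) = (v|\alpha),
\end{equation}
because the component of $v$ orthogonal to $\text{span}(bR_{deg})$ is in particular orthogonal to $\alpha$. Applying this identity to $v = \lambda + \rho$ yields $(\lambda' + \rho_b'|\alpha) = (\lambda + \rho|\alpha)$ for all $\alpha \in bR_{deg}$.

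Second I would use that $\lambda$ is an integral weight of $\su(N)$ by hypothesis and that $\rho$ is also a weight (being the sum of the fundamental weights). Hence $\lambda + \rho$ is integral, meaning $2(\lambda+\rho|\alpha)/(\alpha|\alpha) \in \mathbb{Z}$ for every ambient root, in particular for every $\alpha \in bR_{deg}$. Combined with the projection identity this gives $2(\lambda' + \rho_b'|\alpha)/(\alpha|\alpha) \in \mathbb{Z}$. Applying the same general fact at the level of the sub-system, the Weyl vector $\rho_b'$ itself lies in the weight lattice of $bR_{deg}$, so subtracting it preserves integrality and delivers the desired $2(\lambda'|\alpha)/(\alpha|\alpha) \in \mathbb{Z}$.

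The one point that requires care, and which I view as the main (though mild) obstacle, is justifying that $bR_{deg}$ really is a root system in its own span, so that the phrase ``weight lattice of the effective representation'' and its Weyl vector $\rho_b'$ are unambiguously defined. This should be handled by observing that the set $R_{deg}$ of ambient roots orthogonal to $h_0$ is closed under its own reflections: if $\alpha,\beta \in R_{deg}$ then $s_\alpha$ fixes $h_0$ and hence sends $\beta$, which is orthogonal to $h_0$, to another root orthogonal to $h_0$. Applying the Weyl-group isometry $b$ transports this closure property to $bR_{deg}$, and standard structure theory then supplies the positive subsystem, the Weyl vector $\rho_b'$, and the weight lattice needed to finish the argument.
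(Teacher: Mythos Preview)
Your proof is correct and follows essentially the same route as the paper: both arguments use the projection identity $(\pi_{bR_{deg}}(v)\,|\,\alpha)=(v\,|\,\alpha)$ for $\alpha\in bR_{deg}$, invoke the integrality of $\lambda$ and $\rho$ in the ambient lattice, and then remove $\rho_b'$ using its integrality in the effective system. The only cosmetic differences are that the paper checks the condition on simple roots and splits the three contributions $\lambda$, $\rho$, $\rho_b'$ separately (noting $2(\rho_b'\,|\,\alpha)/(\alpha\,|\,\alpha)=1$ on simple $\alpha$), whereas you bundle $\lambda+\rho$ first and add the extra sanity check that $bR_{deg}$ is a bona fide root subsystem.
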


\begin{proof}
The condition we need to check is that $\frac{2(\alpha|\lambda')}{(\alpha|\alpha)}\in\mathbb{Z}$ for all simple roots $\alpha \in bR_{+,deg}$. We start with the definition of $\lambda'$. Because $\alpha$ lies in the subspace on which we project, by the properties of the orthogonal projection we have $(\alpha|\lambda' + \rho_b') = (\alpha|\lambda + \rho)$.

This directly implies:
$$
\frac{2(\alpha|\lambda'+\rho_b')}{(\alpha|\alpha)}=\frac{2(\alpha|\lambda+\rho)}{(\alpha|\alpha)}
$$
Rearranging this equation to solve for the term of interest, we get:
\begin{equation}
    \label{eq:integrality}
    \frac{2(\alpha|\lambda')}{(\alpha|\alpha)}=\frac{2(\alpha|\lambda)}{(\alpha|\alpha)}+\frac{2(\alpha|\rho)}{(\alpha|\alpha)}-\frac{2(\alpha|\rho_b')}{(\alpha|\alpha)}
\end{equation}
We analyze each term on the right-hand side of Eq. \eqref{eq:integrality}:
\begin{enumerate}
    \item The term $\frac{2(\alpha|\lambda)}{(\alpha|\alpha)}$ is an integer because $\lambda$ is an integral weight of the original representation and $\alpha$ is a root within that system.
    \item The term $\frac{2(\alpha|\rho)}{(\alpha|\alpha)}$ is an integer because the Weyl vector $\rho$ is a known integral weight of the original representation.
    \item The term $\frac{2(\alpha|\rho_b')}{(\alpha|\alpha)}$ is equal to $1$, by the fundamental property of the Weyl vector $\rho_b'$, since $\alpha$ is a simple root of the effective system.
\end{enumerate}

All the terms on the right of Eq. \eqref{eq:integrality} are integers, so their sum is an integer. Therefore, the integrality condition is fulfilled, and $\lambda'$ belongs to the weight lattice of our effective representation. (It may not belong to the weight lattice of our original representation, as we are checking the integrality condition only for roots in $bR_{+,deg}$.)
\end{proof}

In the limit $\delta \to 0$, this ratio gives the dimension of this sub-representation. By bounding the magnitude of the full character at $h_0$ and analyzing the limit of a representative term, we find that the ratio $|\chi_\lambda(h_0)|/\dimL$ is controlled by the ratio of dimension formulas. This is the same procedure as for su$(3)$. For a representative term, this ratio is:
\begin{equation}
    \label{eq:boundconst}
    \frac{\prod_{\alpha \in R_{+, \text{deg}}} \frac{(\lambda+\rho|\alpha)}{(\rho'_{b}|\alpha)}}{\prod_{\alpha \in R_{+}} \frac{(\lambda+\rho|\alpha)}{(\rho|\alpha)}} = \frac{\text{Constant}}{\prod_{\alpha \in R_{+, \text{ndeg}}} (\lambda+\rho|\alpha)}.
\end{equation}
To finish, we just have to prove the following

\begin{lemma}
\label{lemma:divergence}
For any non-zero singular element $h_0$ and a highest weight $\lambda$, the product over the non-degenerate roots diverges as $||\lambda|| \to \infty$:
\[
\lim_{||\lambda|| \to \infty} \prod_{\alpha \in R_{+, \text{ndeg}}} (\lambda+\rho|\alpha) = \pm \infty.
\]
\end{lemma}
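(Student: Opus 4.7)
The plan is to reduce the divergence to a combinatorial statement about the Dynkin diagram. First, note that every factor $(\lambda+\rho|\alpha)$ is bounded below by $(\rho|\alpha)>0$ since $\lambda$ is dominant, so the product is positive and at least a fixed constant. It therefore suffices to exhibit one non-degenerate positive root $\alpha_*$ such that $(\lambda+\rho|\alpha_*)\to\infty$ along every sequence with $\|\lambda\|\to\infty$. Renormalizing $\hat\lambda:=\lambda/\|\lambda\|$ and using compactness of the closed dominant chamber on the unit sphere, pass to a subsequential limit $\hat\lambda$, a non-zero dominant unit vector. The lemma then reduces to the following algebraic claim: for every non-zero dominant $\hat\lambda$, there exists $\alpha_*\in R_{+,\text{ndeg}}$ with $(\hat\lambda|\alpha_*)>0$. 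Given this, $(\lambda_n+\rho|\alpha_*)\sim\|\lambda_n\|(\hat\lambda|\alpha_*)\to\infty$, while the other factors remain bounded below by positive constants, so the full product diverges.

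I would establish the claim by contradiction. Suppose $(\hat\lambda|\alpha)=0$ for every $\alpha\in R_{+,\text{ndeg}}$. Since the roots span $\mathfrak{h}^*$, the annihilators of $R_{+,\text{deg}}$ and $R_{+,\text{ndeg}}$ intersect trivially, so $\hat\lambda$ cannot also be orthogonal to $R_{+,\text{deg}}$; combined with dominance, this forces some \emph{degenerate} simple root $\alpha_j$ to satisfy $(\hat\lambda|\alpha_j)>0$. On the other hand, $h_0\neq 0$ in the Cartan of a simple Lie algebra forces at least one simple root $\alpha_i$ to be non-degenerate. By connectedness of the Dynkin diagram there is a simple path $\alpha_j=\alpha_{i_0},\alpha_{i_1},\ldots,\alpha_{i_k}=\alpha_i$ whose interior nodes all lie in the degenerate set. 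Define $\alpha_*:=\alpha_{i_0}+\alpha_{i_1}+\cdots+\alpha_{i_k}$. A standard $\mathfrak{sl}_2$-string induction — at each step the current partial sum pairs strictly negatively with the next simple root on the path, so adjoining it yields a root — shows that $\alpha_*$ is itself a positive root. Its pairing with $h_0$ telescopes to $(h_0|\alpha_i)\neq 0$, placing $\alpha_*\in R_{+,\text{ndeg}}$, and its pairing with $\hat\lambda$ is at least $(\hat\lambda|\alpha_j)>0$ because all remaining terms are non-negative by dominance. This contradicts the starting assumption and proves the claim. As a bonus, the construction is constructive: the path in the Dynkin diagram explicitly identifies the non-degenerate root responsible for the divergence.

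The main obstacle I expect is the inductive verification that the path sum is a root. For the simply-laced types $A_n,D_n,E_n$ — which cover the SU$(N)$ case of Section \ref{sec:sun_proof} — this is immediate, since $(\alpha|\beta)\in\{-1,0\}$ for distinct simple roots and the $\mathfrak{sl}_2$-string criterion applies transparently. For the non-simply-laced types $B_n,C_n,F_4,G_2$ needed in the generalization of Section \ref{sec:generalization}, one must track long and short roots at the multi-laced nodes and, at such a node, possibly allow the simple root there to appear with integer coefficient greater than one; the inductive step still goes through but only after a short case check. A minor secondary point is that the compactness reduction genuinely works: the factors other than the distinguished $\alpha_*$ are uniformly bounded below by $(\rho|\alpha)$, so subsequential convergence of $\hat\lambda_n$ is enough to conclude divergence of the full product.
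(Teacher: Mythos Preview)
Your argument is correct and essentially anticipates the paper's \emph{general} proof in Section~\ref{sec:exceptional}, but it is genuinely different from the proof the paper gives for Lemma~\ref{lemma:divergence} itself. In Section~\ref{sec:sun_proof} the paper argues in explicit $\mathrm{SU}(N)$ coordinates: it assumes $|(\lambda|\alpha)|<\tfrac12|\lambda|_\infty$ for every non-degenerate $\alpha=L_i-L_j$, combines a triangle-inequality step (pivoting through a third index $k$ with $(h_0)_k\neq(h_0)_i$) with the trace-zero constraint $\sum_j\lambda_j=0$, and derives a contradiction. Your route---lower-bound each factor by $(\rho|\alpha)$, pass to a subsequential direction $\hat\lambda$, and produce a non-degenerate root via a path in the Dynkin diagram---is type-independent and is precisely the strategy the paper deploys later in Section~\ref{sec:exceptional} (your $\mathfrak{sl}_2$-string step is their lemma that a chain of adjacent simple roots sums to a root). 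What you gain is uniformity across all simple types in one shot; what the paper's $\mathrm{SU}(N)$ proof gains is an explicit quantitative bound $\sim|\lambda|_\infty^{-1}$ on the rate (their Corollary following the lemma).

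Two small points. First, your worry about non-simply-laced types is unnecessary: because Dynkin diagrams are trees, non-consecutive nodes on a simple path are non-adjacent, so at each inductive step the partial sum $\beta_m$ pairs with the next node $\alpha_{i_{m+1}}$ as $(\beta_m|\alpha_{i_{m+1}})=(\alpha_{i_m}|\alpha_{i_{m+1}})<0$; together with the ``mixed signs'' observation that $\beta_m-\alpha_{i_{m+1}}$ is not a root, this forces $\beta_m+\alpha_{i_{m+1}}$ to be a root in every type---no coefficients larger than $1$ are needed. Second, the clause ``whose interior nodes all lie in the degenerate set'' is not automatic; you should either truncate the path at the \emph{first} non-degenerate node encountered (so interior nodes are degenerate by construction), or, equivalently, place $h_0$ in the closed dominant chamber by a Weyl conjugation, after which $(h_0|\alpha_*)=\sum_\ell (h_0|\alpha_{i_\ell})\ge(h_0|\alpha_i)>0$ regardless of the interior nodes.
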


\begin{proof}
By Eq (\ref{eq:intcond}) we know that $\frac{2 (\lambda|\alpha)}{(\alpha|\alpha)} \in \mathbb{Z}$, and thus if one of our product's factors is diverging, the whole product is diverging too. We will show that at least one of the factors is bigger or equal to $\frac{1}{2}|\lambda|_{\infty}$ which goes to infinity as $||\lambda||\to\infty$. We will argue by contradiction, i.e. we will assume that for every $\alpha \in R_{+, \text{ndeg}}$, we have $|(\lambda|\alpha)| < \frac{1}{2}|\lambda|_\infty$.

Consider two diagonal entries $(h_0)_i$ and $(h_0)_j$ of our singular element $h_0$ (from Eq. \ref{eq:effchar}). We assume that $h_0$ is nontrivial, i.e. not all its diagonal entries are the same. In what follows we WLOG assume that $i>j>k$. Now we have two cases:

\begin{enumerate}

    \item If $(h_0)_i=(h_0)_j$, then we can find an element $(h_0)_k$ such that $(h_0)_k \neq (h_0)_j$, because $h_0$ is nontrivial. This means that both roots $\alpha_{jk}=L_{j}-L_{k}$ and $\alpha_{ik}=L_{i}-L_{k}$ belong to $R_{+,ndeg}$. Therefore, by the triangle inequality,
    $$
    |(\lambda)_{i}-(\lambda)_{j}| = |((\lambda)_i - (\lambda)_k) - ((\lambda)_j - (\lambda)_k)| \le |(\lambda)_i - (\lambda)_k| + |(\lambda)_j - (\lambda)_k| < \frac{1}{2}|\lambda|_\infty + \frac{1}{2}|\lambda|_\infty = |\lambda|_\infty.
    $$

    \item If $(h_0)_i \neq (h_0)_j$, then the root $\alpha_{ij} = L_i - L_j$ is in $R_{+,ndeg}$. Thus, by our assumption, $|(\lambda)_{j}-(\lambda)_{i}| < \frac{1}{2}|\lambda|_{\infty} < |\lambda|_\infty$.

\end{enumerate}

We WLG assume that the maximum-module component of $\lambda$ is $\lambda_{1} = |\lambda|_{\infty} > 0$. From the two cases considered, we know that for every $j$, $|\lambda_{1}-\lambda_{j}| < |\lambda|_{\infty}$. Since $\lambda_1 = |\lambda|_\infty$, this gives $\lambda_{1}-\lambda_{j} \le |\lambda_{1}-\lambda_{j}| < \lambda_{1}$. The inequality $\lambda_{1}-\lambda_{j} < \lambda_{1}$ implies that all of the components $\lambda_j$ are positive, which is not possible as $\sum_{j}\lambda_{j}=0$.

\end{proof}

\begin{corollary}
    The rate of convergence of the normalized trace is entirely dependent on the path $\lambda_i$ by which we go to $\infty$. However, regardless of the path, we have a universal upper bound proportional to $1/|\lambda|_{\infty}$. For specific paths, the rate of convergence can be easily deduced from \ref{eq:boundconst}, for example one can easily see that for a path $n\lambda$ where $n\in \mathbb{Z}$ the rate is $1/n^m$ where m is the number of nondegenerate roots, that are not perpendicular to $\lambda$ i.e. $(\lambda,\alpha)\neq0$
\end{corollary}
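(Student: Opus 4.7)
The plan is to read off both assertions directly from the bounding inequality \eqref{eq:boundconst}, which controls the normalized character by a fixed constant divided by $P(\lambda) := \prod_{\alpha \in R_{+,\text{ndeg}}}(\lambda+\rho|\alpha)$. Dominance of $\lambda$ guarantees $(\lambda|\alpha) \geq 0$ for every positive root, hence each factor of $P(\lambda)$ is at least $(\rho|\alpha) > 0$; the entire question reduces to how this product grows.

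For the universal $1/|\lambda|_\infty$ bound I would reuse the content of Lemma \ref{lemma:divergence}. Its proof is constructive: by contradiction it produces a particular non-degenerate root $\alpha_*$ with $|(\lambda|\alpha_*)| \geq \tfrac{1}{2}|\lambda|_\infty$. Dominance upgrades this to $(\lambda+\rho|\alpha_*) \geq \tfrac{1}{2}|\lambda|_\infty$, while every remaining factor is bounded below by $c := \min_{\alpha \in R_+}(\rho|\alpha) > 0$. Multiplying, $P(\lambda) \geq \tfrac{1}{2}\, c^{\,|R_{+,\text{ndeg}}|-1}\,|\lambda|_\infty$, and substituting into \eqref{eq:boundconst} yields the path-independent upper bound of order $1/|\lambda|_\infty$.

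For the specific ray $\lambda \mapsto n\lambda$ I would expand each factor as $(n\lambda+\rho|\alpha) = n(\lambda|\alpha) + (\rho|\alpha)$. The non-degenerate positive roots then split into two groups: those perpendicular to $\lambda$ contribute the constant $(\rho|\alpha)$, while those with $(\lambda|\alpha)\neq 0$ contribute a factor growing linearly in $n$. Collecting, $P(n\lambda) = C_\lambda\, n^m + O(n^{m-1})$ with $C_\lambda \neq 0$, where $m$ is precisely the count of $\alpha \in R_{+,\text{ndeg}}$ with $(\lambda|\alpha)\neq 0$. Inserted into \eqref{eq:boundconst} this gives the claimed $1/n^m$ decay and simultaneously exhibits the path dependence: the exponent $m$ depends on the arithmetic of $\lambda$ relative to the root system, so different directions lead to genuinely different rates.

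No serious obstacle is anticipated. The only step that deserves verification is that the factors of $P(\lambda)$ other than $(\lambda+\rho|\alpha_*)$ do not become anomalously small; this is handled uniformly by the bound $(\rho|\alpha) \geq c > 0$, a finite-dimensional statement about the root system that requires no further argument. Everything else is polynomial bookkeeping already set up by Lemma \ref{lemma:divergence} and \eqref{eq:boundconst}.
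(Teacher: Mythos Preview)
Your proposal is correct and tracks exactly the reasoning the paper leaves implicit: the Corollary is stated without a separate proof, as an immediate consequence of \eqref{eq:boundconst} together with the constructive content of Lemma~\ref{lemma:divergence}, and you have simply spelled out those details. The only cosmetic difference is that where the paper invokes integrality (Eq.~\eqref{eq:intcond}) to argue that one diverging factor forces the whole product $P(\lambda)$ to diverge, you instead use dominance of $\lambda$ to obtain the uniform lower bound $(\lambda+\rho|\alpha)\ge(\rho|\alpha)>0$ on the remaining factors; both routes yield the same $1/|\lambda|_\infty$ bound, and your treatment of the ray $n\lambda$ is precisely the polynomial bookkeeping the paper alludes to.
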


A deep analysis of the rate of convergence from the topological point of view, can be found in \cite{etingof2025boundsasymptoticcharacterssimple}.

\section{Discussion and Implications}
\label{sec:discussion}

\subsection{Interpretation: The Stabilizer and the Centralizer}

The formula for the character at a singular point, Eq.~\eqref{eq:effchar}, has a deep geometric meaning that relates the structure of the stabilizer subgroup $W_0$ to the centralizer of $h_0$. We will show that a slight change in our limiting procedure results in the WCF for the centralizer subgroup of $h_0$.

\begin{lemma}
The centralizer of a singular element $h_0 \in \su(N)$ is a product of smaller unitary groups. Specifically, if $h_0$ has degenerate eigenvalues with multiplicities $n_1, n_2, \dots, n_k$, then its centralizer is $Z(h_0) \cong \mathrm{S}(\mathrm{U}(n_1) \times \dots \times \mathrm{U}(n_k)) \cong \su(n_1) \times \dots \times \su(n_k) \times \mathfrak{u}(1)^{k-1}$.
\end{lemma}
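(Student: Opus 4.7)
The plan is to prove this in three concrete steps: first classify matrices in $SU(N)$ that commute with $h_0$, then identify the resulting group as $\mathrm{S}(\mathrm{U}(n_1)\times\dots\times\mathrm{U}(n_k))$, and finally read off the Lie algebra.

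To begin, I would use the fact that $h_0 \in \su(N)$ is anti-Hermitian, hence diagonalizable; conjugating by a fixed element of $SU(N)$ does not change the isomorphism class of the centralizer, so WLOG I take $h_0$ to be already diagonal, with its eigenvalues grouped into consecutive blocks of sizes $n_1,\dots,n_k$ corresponding to the $k$ distinct eigenvalues. I would then characterize $Z(h_0)=\{g\in SU(N):gh_0=h_0g\}$ via an eigenspace argument: if $V_j$ denotes the eigenspace of $h_0$ for the $j$-th distinct eigenvalue, then any $g$ commuting with $h_0$ must map each $V_j$ into itself, because $h_0(gv)=g(h_0 v)$ shows $gv$ lies in the same eigenspace as $v$. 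With $h_0$ in the chosen block form, this is exactly the statement that $g$ is block-diagonal with blocks of sizes $n_1,\dots,n_k$.

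Next I would impose the defining conditions of $SU(N)$. Unitarity of a block-diagonal matrix is equivalent to each block being unitary, so each block $g_j$ lies in $\mathrm{U}(n_j)$. The condition $\det g = 1$ becomes $\prod_{j=1}^k \det g_j = 1$, which is precisely the definition of the subgroup $\mathrm{S}(\mathrm{U}(n_1)\times\dots\times\mathrm{U}(n_k))\subset \mathrm{U}(n_1)\times\dots\times\mathrm{U}(n_k)$. This gives the first isomorphism. Conversely, any such block-diagonal matrix visibly commutes with $h_0$, so this inclusion is an equality.

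For the Lie algebra, I would differentiate at the identity. The Lie algebra of $\mathrm{U}(n_j)$ is $\mathfrak{u}(n_j)=\su(n_j)\oplus i\mathbb{R}\cdot I_{n_j}$, where the summand $i\mathbb{R}\cdot I_{n_j}$ is the trace part. The determinant-one condition $\prod_j \det g_j=1$ linearizes to the single scalar constraint $\sum_{j=1}^k \mathrm{tr}(X_j)=0$ on $(X_1,\dots,X_k)\in \bigoplus_j \mathfrak{u}(n_j)$. This constraint is entirely absorbed by the $k$-dimensional abelian subspace $\bigoplus_j i\mathbb{R}\cdot I_{n_j}$, cutting it down to a $(k-1)$-dimensional abelian piece, while the traceless parts $\su(n_j)$ are untouched. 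Collecting everything yields $\mathfrak{z}(h_0)\cong \su(n_1)\oplus\dots\oplus \su(n_k)\oplus \mathfrak{u}(1)^{k-1}$, matching the claim.

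The main obstacle, such as it is, lies in the bookkeeping of the last step: one must verify that the single linear trace condition really cuts exactly one dimension off the abelian part and that it does not mix with the $\su(n_j)$ summands. This is straightforward because each $\su(n_j)$ is already traceless, so the constraint touches only the $k$-dimensional center $\bigoplus_j i\mathbb{R}\cdot I_{n_j}$ of $\bigoplus_j\mathfrak{u}(n_j)$; the complementary subspace satisfying $\sum_j \mathrm{tr}(X_j)=0$ is visibly $(k-1)$-dimensional, confirming the $\mathfrak{u}(1)^{k-1}$ factor.
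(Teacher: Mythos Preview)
Your proposal is correct and follows essentially the same approach as the paper. The paper phrases the eigenspace-preservation step in the language of Schur's lemma (viewing a commuting $M$ as an intertwiner of representations of the one-dimensional abelian algebra generated by $h_0$), whereas you argue it directly via $h_0(gv)=g(h_0 v)$; these are the same argument in different clothing. Your treatment is in fact more complete than the paper's formal proof, which only establishes the block-diagonal form and relegates the determinant condition and the Lie-algebra decomposition $\su(n_1)\oplus\cdots\oplus\su(n_k)\oplus\mathfrak{u}(1)^{k-1}$ to a worked $SU(5)$ example preceding the proof.
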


To see why the isomorphism holds we consider the SU$(5)$ case and $h_0=\text{diag}(a,a,a,b,b)$. The centralizer will turn out to be the set of block diagonal matrices 

\[
  M =
  \begin{pmatrix}
    \mathbf{A} & \mathbf{0} \\
    \mathbf{0} & \mathbf{B}
  \end{pmatrix},
\]
where $\mathbf{A}$ is a $3 \times 3$ unitary matrix and $\mathbf{B}$ is a $2 \times 2$ unitary matrix. The overall determinant must be equal to one, so we can parametrize our matrix by two special unitary matrices and a relative phase between them, giving the group structure $\mathrm{S(U(3) \times U(2))} \cong \mathrm{SU}(3) \times \mathrm{SU}(2) \times \mathrm{U}(1)$. 

\begin{proof}
We consider the one dimensional lie algebra containing $h_0$. $Mh_0=h_0M$, therefore $M$ is an interwiner of the defining representations. From the Schur lemma we know that those can act only between the isomorphic representations and for an abelian lie algebra representations are isomorphic iff they have the same eigenvalue. Thus we conclude the block diagonal form of M.
\end{proof}

\begin{lemma}
The stabilizer subgroup $W_0 = \Stab_W(h_0)$ is generated by the reflections in the degenerate roots. It is the permutation group on the components of $h_0$ that leaves it invariant.
\end{lemma}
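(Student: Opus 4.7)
The plan is to reduce the statement to a concrete fact about the symmetric group $S_N$ acting on the diagonal entries of $h_0$. Recall that the Weyl group of $\su(N)$ is $W = S_N$, where a permutation $w$ acts on an element $h \in \mathfrak{h}$ by permuting its diagonal entries; in particular, the reflection $s_{\alpha}$ associated to the root $\alpha = L_i - L_j$ is the transposition $(i\,j)$. The first step is to identify when a reflection stabilizes $h_0$: directly from the formula $s_{\alpha}(h_0) = h_0 - \frac{2(h_0|\alpha)}{(\alpha|\alpha)}\alpha$, one sees that $s_{\alpha}$ fixes $h_0$ iff $(h_0|\alpha) = 0$, and for $\alpha = L_i - L_j$ this is equivalent to $(h_0)_i = (h_0)_j$, i.e., to $\alpha \in R_{\text{deg}}$. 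Hence every reflection in a degenerate root lies in $W_0$, giving the easy inclusion $\langle s_\alpha : \alpha \in R_{\text{deg}}\rangle \subseteq W_0$.

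The substantive direction is the reverse inclusion. The key observation is that a permutation $w \in S_N$ stabilizes $h_0$ iff it only permutes indices that share a common diagonal entry. Grouping the $N$ indices into equivalence classes $I_1, \ldots, I_k$ according to the value of $(h_0)_i$ (so that the multiplicities from the preceding centralizer lemma are $n_r = |I_r|$), the condition $w(h_0) = h_0$ is exactly the statement that $w$ permutes each $I_r$ within itself. Thus $W_0$ is the Young subgroup $S_{I_1} \times \cdots \times S_{I_k}$. It now remains to observe that each factor $S_{I_r}$ is generated by its transpositions $(i\,j)$ with $i,j \in I_r$, and that by the characterization above each such transposition is precisely a reflection $s_{L_i - L_j}$ with $L_i - L_j$ degenerate. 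Therefore $W_0$ is generated by reflections in degenerate roots, finishing the proof and yielding the concrete description $W_0 \cong S_{n_1} \times \cdots \times S_{n_k}$, which is the Weyl-group counterpart of the centralizer decomposition in the previous lemma.

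The only delicate point is the claim that $w(h_0) = h_0$ forces $w$ to preserve the level sets of the diagonal function $i \mapsto (h_0)_i$. This is immediate for $\su(N)$ because the action of $W$ on $h_0$ is by literal permutation of a list of numbers, so two indices can be swapped only if their entries are equal. This keeps the argument elementary and avoids invoking the general Steinberg fixed-point theorem on stabilizers in reflection groups, although that theorem is what would be needed to generalize the lemma to arbitrary compact reductive $G$ in Section \ref{sec:generalization}.
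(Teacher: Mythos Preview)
Your proof is correct and follows essentially the same approach as the paper's: both identify the degenerate reflections with transpositions of equal diagonal entries and recognize the stabilizer as the Young subgroup $S_{n_1}\times\cdots\times S_{n_k}$ generated by these. Your version is more explicit---spelling out both inclusions and the reason $w(h_0)=h_0$ forces $w$ to preserve the level sets of the diagonal---whereas the paper compresses this into a single sentence.
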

\begin{proof}
The fact that $\alpha(h_0)=0$ for a root $\alpha=L_i-L_j$ is formally equivalent to saying that for $h_0 = \text{diag}(h_1, \dots, h_N)$, the components $h_i$ and $h_j$ are equal. The reflection $s_\alpha$ acts by transposing the components $h_i$ and $h_j$. The group generated by these transpositions is precisely the permutation group that preserves the block structure of equal diagonal entries.
\end{proof}

Going back to our SU$(5)$ example, we see that the Weyl group of the centralizer is $W(Z(h_0)) = W(\mathrm{SU}(3)) \times W(\mathrm{SU}(2)) = S_3 \times S_2$. This is exactly the stabilizer of $h_0$. But we cannot yet conclude that the effective group whose character we found in Eq.~\eqref{eq:effchar} is the full centralizer, as the Weyl group does not uniquely specify the group. Indeed, the limiting procedure with $\delta$ constrained to the span of degenerate roots recovers the WCF for the semi-simple part of the centralizer, e.g., $\mathrm{SU}(3) \times \mathrm{SU}(2)$. The abelian $\mathrm{U}(1)$ factors, which have trivial Weyl groups, are recovered from the phase factors that emerge when $\delta$ is taken to be generic.

\begin{lemma}
The group that we get in the limiting procedure is the semisimple part of the centralizer of $h_0$.
\end{lemma}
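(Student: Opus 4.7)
The plan is to identify the three pieces of data that specify the effective Lie algebra emerging from the limiting procedure in Eq.~\eqref{eq:effchar}---its root system, its Cartan subalgebra, and its weight lattice---and match each one with the corresponding datum of $\su(n_1) \times \dots \times \su(n_k)$, the semisimple part of $Z(h_0)$.

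First, I would write $h_0 = \mathrm{diag}(a_1 I_{n_1}, \dots, a_k I_{n_k})$ with the $a_\ell$ pairwise distinct, so that the degenerate root set consists precisely of those $L_i - L_j$ for which $i,j$ lie in the same block. This set decomposes as a disjoint union of sub-root-systems of type $A_{n_\ell-1}$, one per block, mutually orthogonal because roots supported in different blocks share no common indices. This is exactly the root system of $\su(n_1) \times \dots \times \su(n_k)$ under its block-diagonal embedding inside $\su(N)$.

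Second, I would observe that the ambient space of this sub-root-system is $\mathrm{span}(R_{+,\text{deg}})$, which is precisely the subspace to which we constrained $\delta$ in the limiting procedure, and that by the previous lemma the Weyl group generated by reflections in the degenerate roots is $S_{n_1} \times \dots \times S_{n_k}$, matching the product of the Weyl groups of the $\su(n_\ell)$ factors. Combining this with Lemma \ref{lemma:weights}, the effective highest weight $\lambda'_b$ obtained by orthogonal projection of $\lambda + \rho$ onto $\mathrm{span}(bR_{+,\text{deg}})$ lies in the weight lattice of this effective root system. Plugging all three data into Eq.~\eqref{eq:effchar} and taking $\delta \to 0$ then reproduces exactly the Weyl character formula for a finite-dimensional irreducible representation of $\su(n_1) \times \dots \times \su(n_k)$.

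The main obstacle, and the place where care is required, is verifying that constraining $\delta$ to $\mathrm{span}(R_{+,\text{deg}})$ genuinely eliminates the abelian $\mathfrak{u}(1)^{k-1}$ factors of the full centralizer. These factors correspond to Cartan directions orthogonal to every degenerate root---the directions that distinguish the blocks themselves---so they contribute nothing to either the numerator sum (whose surviving part is a sum over $\Stab_W(h_0)$ of exponentials depending only on $\delta$) or to the singular denominator. Consequently the group whose WCF emerges is the semisimple part $\su(n_1) \times \dots \times \su(n_k)$, while the $\mathrm{U}(1)$ factors appear only as multiplicative phase constants $e^{i(\eta|bh_0)}$ in the final expression, confirming the claim.
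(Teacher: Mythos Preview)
Your argument is correct and arrives at the same conclusion, but by a more explicit and concrete route than the paper. The paper defines the effective algebra abstractly as $\mathfrak{g}_{\text{eff}} = \langle\bigoplus_{\alpha \in R_{\text{deg}}} \mathfrak{g}_\alpha\rangle$ and then checks two things in two lines: (i) every element of $\mathfrak{g}_{\text{eff}}$ commutes with $h_0$, since $[k,h_0]=\sum_\alpha \alpha(h_0)k_\alpha=0$; and (ii) the restricted Killing form is nondegenerate, hence $\mathfrak{g}_{\text{eff}}$ is semisimple. You instead work out the degenerate root system concretely from the block structure of $h_0$, recognize it as a disjoint orthogonal union of type-$A_{n_\ell-1}$ systems, and then match root system, Weyl group, and weight lattice to those of $\su(n_1)\times\dots\times\su(n_k)$. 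Your approach is more illustrative in the $\su(N)$ setting and makes the block decomposition fully visible; the paper's approach is terser and carries over verbatim to any simple compact algebra without appealing to the $L_i$-coordinates. One minor inaccuracy: in your last sentence you identify the abelian $\mathrm{U}(1)$ factors with the prefactor $e^{i(\eta\mid bh_0)}$, but in the paper's account those abelian characters are recovered instead from the factor $e^{i(\eta\mid b\delta_\perp)}$ that appears when $\delta$ is allowed a component orthogonal to $\mathrm{span}(R_{+,\text{deg}})$; the $h_0$-prefactor is already present before any limit and does not itself encode the $\mathrm{U}(1)$ directions. This does not affect the validity of your proof of the lemma, only the surrounding interpretation.
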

\begin{proof}
We define our effective group algebra as an algebra generated by a direct sum of root spaces corresponding to degenerate roots, $\mathfrak{g}_{\text{eff}} = \langle\bigoplus_{\alpha \in R_{\text{deg}}} \mathfrak{g}_\alpha\rangle$. For any $k \in \mathfrak{g}_{\text{eff}}$, it is obvious that $[k, h_0] = \sum_{\alpha \in R_{\text{deg}}} [k_\alpha, h_0] = \sum_{\alpha \in R_{\text{deg}}} \alpha(h_0)k_\alpha = 0$, so the exponents commute. One can easily see that the Killing form restricted to this algebra is nondegenerate.

\end{proof}

Now, we have to express what we mean by the "almost" part. It's easy to see that the full centralizer algebra is given by $\mathfrak{h} \oplus \bigoplus_{\alpha \in R_{\text{deg}}} \mathfrak{g}_\alpha$. But one can see that not a whole $\mathfrak{h}$ is generated by the commutators within $\mathfrak{g}_{\text{eff}}$. Indeed, the space spanned by the Cartan elements $[\mathfrak{g}_{\alpha},\mathfrak{g}_{-\alpha}] $ for the non-degenerate roots $\alpha$ is absent.

It seems strange that we get only part of the centralizer in our procedure. The key to all of this is that we have assumed that $\delta \in \mathrm{span}(R_{+, \text{deg}})$, which simplified our reasoning. Now we relax this assumption and write $\delta = \delta_\parallel + \delta_\perp$, where these terms correspond respectively to the part that belongs to the span of the degenerate roots and the part that is orthogonal to this space. The second part, $\delta_\perp$, is an invariant of the stabilizer group $W_0$. The term $e^{i(\eta|b\sigma\delta_\perp)}$ can be simplified to $e^{i(\eta|b \delta_\perp)}$ and factored out of the inner sum. Thus we get:
\begin{equation}
    e^{i(\eta|b\delta_\perp)} \sum_{\sigma \in W_0} \sgn(\sigma) e^{i(\eta|b\sigma\delta_\parallel)}.
\end{equation}
One sees that the factor before our sum gives us exactly the missing character of the U(1) factors that form the center of the centralizer. Thus, we recover the character of the full centralizer of $h_0$ and prove the

\begin{corollary}
    The group that we get in the limiting procedure when we do not restrict $\delta$ is exactly the centralizer of $h_0$.
\end{corollary}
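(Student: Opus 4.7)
The plan is to verify explicitly that dropping the restriction $\delta \in \text{span}(R_{+,\text{deg}})$ in the limiting procedure of Section~\ref{sec:sun_proof} upgrades the effective Weyl character formula from one for the semisimple part of the centralizer to one for the full centralizer $Z(h_0) \cong \mathrm{S}(\mathrm{U}(n_1) \times \cdots \times \mathrm{U}(n_k))$. The key observation, already sketched in the paragraph preceding the corollary, is that the phase $e^{i(\eta|b\delta_\perp)}$ which emerges once we drop this restriction is precisely the contribution of the abelian $\mathrm{U}(1)^{k-1}$ factors of the centralizer.

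First I would decompose a generic perturbation as $\delta = \delta_\parallel + \delta_\perp$, where $\delta_\parallel \in \text{span}(R_{+,\text{deg}})$ and $\delta_\perp$ lies in its orthogonal complement. Because $W_0 = \Stab_W(h_0)$ is generated by the reflections $s_\alpha$ for $\alpha \in R_{+,\text{deg}}$, each such reflection fixes the orthogonal complement of $\text{span}(R_{+,\text{deg}})$ pointwise, so $\sigma \delta_\perp = \delta_\perp$ for every $\sigma \in W_0$. This allows $e^{i(\eta|b\sigma\delta_\perp)} = e^{i(\eta|b\delta_\perp)}$ to be pulled out of the inner sum in Eq.~\eqref{eq:effchar}. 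The remaining singular ratio depends only on $\delta_\parallel$, since the denominator pairs $\delta$ with roots in $bR_{+,\text{deg}}$ which annihilate $\delta_\perp$. Taking $\delta_\parallel \to 0$ then reproduces the character of the semisimple factor $\mathrm{SU}(n_1) \times \cdots \times \mathrm{SU}(n_k)$ evaluated at the corresponding group element, with the effective highest weight supplied by Lemma~\ref{lemma:weights}.

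Next I would identify the surviving phase $e^{i(\eta|b\delta_\perp)}$ as the character of the $\mathrm{U}(1)^{k-1}$ center at the element $\exp(ib\delta_\perp)$, with weight equal to the orthogonal projection of $\eta$ onto the span of the $\delta_\perp$ directions. Combining this phase with the semisimple contribution inside each coset summand, multiplying by the regular denominator from $R_{+,\text{ndeg}}$, and summing over $b \in W/W_0$ should then reassemble exactly into the Weyl character formula for the full centralizer $Z(h_0)$, since the Weyl group of the abelian part is trivial and its characters are plain exponentials in $\delta_\perp$.

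The main obstacle I anticipate is the bookkeeping needed to match weights and Weyl vectors on both sides: one must verify that the projection of $\lambda+\rho$ onto the central directions yields a valid character of $\mathrm{U}(1)^{k-1}$ (an abelian analogue of Lemma~\ref{lemma:weights}), and that the coset sum, with its signs $\sgn(b)$ and translation factors $e^{i(\eta|bh_0)}$, assembles correctly without spurious cross-terms between the semisimple and abelian pieces. Once this matching is pinned down, the corollary follows directly from the structure already developed in the preceding lemmas.
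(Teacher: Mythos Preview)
Your proposal is correct and follows essentially the same approach as the paper: the paper's argument for this corollary is precisely the paragraph preceding it, which decomposes $\delta=\delta_\parallel+\delta_\perp$, uses the $W_0$-invariance of $\delta_\perp$ to factor out $e^{i(\eta|b\delta_\perp)}$, and identifies this phase with the character of the missing $\mathrm{U}(1)$ factors. Your additional bookkeeping remarks (the abelian analogue of Lemma~\ref{lemma:weights}, the matching of projections and signs across cosets) go slightly beyond what the paper spells out, but they are natural refinements of the same line of reasoning rather than a different method.
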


\section{Generalizations and Limitations}
\label{sec:generalization}

The arguments from section \ref{sec:sun_proof} leading to the final form of the character at a singular point are general manipulations of the WCF, so they transfer verbatim to the other classical compact simple Lie groups. The crux of the generalization is to establish a version of Lemma \ref{lemma:divergence} for each classical series. As the proof must employ the specific root structure of each Lie algebra, it should be conducted for each type separately. Yet, it is possible to give a unified argument for all classical series using the connectivity of the Dynkin diagram, as we will show in \ref{sec:exceptional} while discussing the exceptional algebras.

\subsection{Classical Simple Algebras}

We can assume that we work in the complexified Lie algebra, as a bound for it will automatically bound the compact real form. This lets us assume the diagonal form of the Cartan Subalgebra. As in the SU$(N)$ case, we assume by contradiction that the product from Lemma \ref{lemma:divergence} remains bounded as $\lambda \to \infty$.

\subsubsection{Type $B_n$: The $\mathfrak{so}(2n+1)$ Algebra}
The set of positive roots is $R_+ = \{ L_i \pm L_j \mid 1 \le i < j \le n \} \cup \{ L_i \mid 1 \le i \le n \}$. Since our singular element $h_0 \neq 0$, at least one of its components, say $(h_0)_i$, is non-zero. This implies that the corresponding root $L_i$ must be non-degenerate, i.e., $L_i \in R_{+, \text{ndeg}}$. For the product to be bounded, the term $(\lambda+\rho, L_i)$ must remain bounded. Since $\rho$ is fixed, this means the component $\lambda_i$ must be bounded.

Now consider any other index $j \neq i$. The root $L_i-L_j$ or $L_i+L_j$ must be non-degenerate (otherwise $h_i = h_j = 0$ for all $j$, which implies $h_0=0$). For the product to be bounded, one of $(\lambda+\rho, L_i-L_j) \approx \lambda_i - \lambda_j$ and $(\lambda+\rho, L_i+L_j) \approx \lambda_i + \lambda_j$ must be bounded. Since $\lambda_i$ is bounded, this implies that $\lambda_j$ must also be bounded. As this holds for all $j$, the norm of $\lambda$ remains bounded, which contradicts the high-dimension limit.

\subsubsection{Type $C_n$: The $\mathfrak{sp}(n)$ Algebra}
The positive roots are $R_+ = \{ L_i \pm L_j \mid 1 \le i < j \le n \} \cup \{ 2L_i \mid 1 \le i \le n \}$. It is easy to see that the proof is identical to the $B_n$ case, with the root $L_i$ replaced by $2L_i$.

\subsection*{Type $D_n$}
The positive roots are $R_+ = \{ L_i \pm L_j \mid 1 \le i < j \le n \}$. First we will consider 

\subsubsection*{$n>2$ case}
We WLOG assume that $(h_0)_1\neq0$. We will assume the product is bounded by $d$ and we will show that all of the components of $\lambda$ are bounded.  

First, notice, that it is enough to show that $\lambda_1$ is bounded. Indeed, for every $j>1$ we have $L_1+L_j\in R_{+,ndeg}$ or $L_1-L_j\in R_{+,ndeg}$. Therefore, if $\lambda_1$ is bounded, then $\lambda_j$ is bounded as well. If we have $i$ such that $(h_0)_i\neq(h_0)_1$ and $(h_0)_i\neq-(h_0)_1$ then $$
|2\lambda_1|\leq |\lambda_1-\lambda_i|+|\lambda_1+\lambda_i|\leq 2d
$$

If not, then all of the diagonal entries are equal to $x$ or $-x$. As $h_0\neq0$, we have $x\neq0$. The easiest way to bound $\lambda_1$ is to consider two cases, first, in which our matrix diagonal is $(x,x,x,\dots)$ and second, in which it is $(x,x,-x,\dots)$. For the first case, we have $L_1+L_2$, $L_2+L_3$, $L_1+L_3\in R_{+,ndeg}$ and thus $$
|2\lambda_1|\leq |\lambda_1+\lambda_2|+|-\lambda_2-\lambda_3|+|\lambda_1+\lambda_3| \leq 3d
$$
For the second case, we have $L_1+L_2$, $L_1-L_3$, $L_2-L_3\in R_{+,ndeg}$ therefore $$
|2\lambda_1|\leq |\lambda_1+\lambda_2|+|-\lambda_2+\lambda_3|+|\lambda_1-\lambda_3| \leq 3d
$$
We bounded $\lambda_1$ and thus all of the components of $\lambda$ are bounded, which contradicts the high-dimension limit.

\subsubsection*{$n=2$ case}

Consider $L_1+L_2$ to be degenerate. Then the only nondegenerate positive root is $L_1-L_2$. We take $\lambda = (m,m)$. We readily see that the denominator is fixed as $m\to\infty$. Therefore our theorem does not hold. The reason for this is that $D_2$ is not simple. Both this and $n=1$ case will be held by the lemma in the section \ref{sec:non-simple}.

\subsection{Exceptional Simple Algebras}
\label{sec:exceptional}

Proofs for classical lie algebras relied on the convenient $L_i$ basis, which would be highly inconvenient here. Instead, we can use a more abstract argument, that uses the connectivity of the Dynkin diagram.

We need two foundational facts:
\begin{itemize}
    \item Simple roots form a dual basis for a Cartan subalgebra. 
    \item For a simple Lie algebra, Dynkin diagram is connected. It means, that for any two simple roots $\alpha_1$ and $\alpha_n$ we have a chain of simple roots $\alpha_1,\alpha_2,\dots,\alpha_n$ such that $(\alpha_i,\alpha_{i+1})<0$.
\end{itemize}
We can assume that a certain subset of simple roots spans the $R_{+,deg}$. We have $$   
 \lambda = \sum_{\alpha\in R_{simple}} \frac{(\alpha,\lambda)}{(\alpha,\alpha)} \alpha $$
We will argue by contrdiciton. We assume that we have a sequence of weights with diverging norm, for which the denominator in \ref{eq:boundconst} i.e. 
\begin{equation}
    \label{eq:thedenominator}
        \prod_{\alpha \in R_{+, \text{ndeg}}} (\lambda+\rho|\alpha) 
\end{equation}
does not diverge to $\infty$. It would then contain a bounded subsequence, thus we can WLOG assume it is bounded.

As the norm of $\lambda$ is diverging, for some simple root $\alpha$, $(\alpha,\lambda)\to \infty$. If $\alpha \in R_{+,ndeg}$ then (\ref{eq:thedenominator})$\to \infty$. If not, we take two simple roots $\alpha$ and $\beta$ degenerate and nondegenerate such that $(\alpha,\lambda)\to \infty$ and $(\beta,\lambda)$ is bounded (we only need a diverging subsequence, so we can assume that everything that do not diverge to $\infty$ is bounded, by choosing a proper subsequence). From the fact that the diagram is connected, we know that there is a path between those roots. Now, we impose that we take roots that fulfill our conditions and minimize the path length. Let us call this path $\alpha,\alpha_1,\dots, \alpha_k, \beta$. From the minimality of the path we know that every $(\alpha_i,\beta)$ is bounded.

\begin{lemma}
If $\alpha$, $\beta$ are roots, then for a string
$$ \beta-p\alpha, \beta-(p-1)\alpha, \dots, \beta+q\alpha $$
$p-q=n_{\beta,\alpha}$.
\end{lemma}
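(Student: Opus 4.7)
The plan is to prove this by exhibiting the $\alpha$-string through $\beta$ as a representation of the $\mathfrak{sl}_2$-triple associated to $\alpha$, and then using the well-known weight symmetry of finite-dimensional $\mathfrak{sl}_2$-representations.

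First I would fix generators $e_\alpha \in \mathfrak{g}_\alpha$, $f_\alpha \in \mathfrak{g}_{-\alpha}$ and set $h_\alpha := [e_\alpha, f_\alpha] \in \mathfrak{h}$, normalized so that $(\beta + k\alpha)(h_\alpha) = n_{\beta,\alpha} + 2k$ where $n_{\beta,\alpha} = \tfrac{2(\beta,\alpha)}{(\alpha,\alpha)}$. Then I would consider the subspace
\begin{equation}
V := \bigoplus_{k=-p}^{q} \mathfrak{g}_{\beta + k\alpha}.
\end{equation}
Because $e_\alpha$ (resp.\ $f_\alpha$) maps $\mathfrak{g}_{\beta+k\alpha}$ into $\mathfrak{g}_{\beta+(k+1)\alpha}$ (resp.\ $\mathfrak{g}_{\beta+(k-1)\alpha}$), the space $V$ is stable under the $\mathfrak{sl}_2$-triple $\langle e_\alpha, f_\alpha, h_\alpha\rangle$, and each summand is an $h_\alpha$-eigenspace with eigenvalue $n_{\beta,\alpha}+2k$.

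Next I would invoke the fact that the string is \emph{unbroken}: every intermediate root space $\mathfrak{g}_{\beta+k\alpha}$ for $-p \le k \le q$ is nonzero. This is the content of the definition of the string and follows from the representation theory of $\mathfrak{sl}_2$ applied to $V$ (one shows that if any intermediate weight space were trivial, raising or lowering operators would produce an $\mathfrak{sl}_2$-submodule with nonconsecutive weights, which is impossible). Consequently the set of $h_\alpha$-weights appearing in $V$ is exactly $\{n_{\beta,\alpha}+2k : -p \le k \le q\}$, forming a chain of integers in arithmetic progression with common difference $2$.

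Finally I would apply the structure theorem for finite-dimensional $\mathfrak{sl}_2$-representations: the set of $h$-weights of any such module is symmetric under $m \mapsto -m$. Applied to $V$, the extreme weights are $n_{\beta,\alpha} + 2q$ (highest) and $n_{\beta,\alpha} - 2p$ (lowest), and symmetry forces
\begin{equation}
n_{\beta,\alpha} + 2q = -(n_{\beta,\alpha} - 2p),
\end{equation}
which rearranges to $p - q = n_{\beta,\alpha}$. The main conceptual obstacle is really the unbrokenness of the string; once that is in place, the argument is purely the $\mathfrak{sl}_2$-symmetry. Since unbrokenness is a standard structural fact about root systems (and is consistent with the usual convention by which $p$ and $q$ in the string are defined), I would likely just cite it rather than reprove it here.
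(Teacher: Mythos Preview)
Your argument is correct, but it takes a different route from the paper. The paper's proof is a one-line computation using the Weyl reflection $s_\alpha$: since $s_\alpha$ permutes the roots and negates the $\alpha$-coefficient, it must carry one end of the string to the other, so $s_\alpha(\beta - p\alpha) = \beta - n_{\beta,\alpha}\alpha + p\alpha = \beta + q\alpha$, giving $p - q = n_{\beta,\alpha}$ immediately. Your approach instead builds the $\mathfrak{sl}_2$-triple attached to $\alpha$, realizes the string as an $\mathfrak{sl}_2$-module, and reads off the identity from the symmetry of weights in finite-dimensional $\mathfrak{sl}_2$-representations. Both are standard textbook arguments; the paper's version is quicker and needs no representation-theoretic machinery, while yours is more structural and, if you actually prove rather than cite unbrokenness, yields that fact as a byproduct. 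Note that for the bare identity $p - q = n_{\beta,\alpha}$ your unbrokenness discussion is not strictly needed: the extreme weight spaces $\mathfrak{g}_{\beta - p\alpha}$ and $\mathfrak{g}_{\beta + q\alpha}$ are nonzero by definition of $p,q$, so the weight symmetry of $V$ already forces the extreme eigenvalues to be negatives of each other.
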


\begin{proof}
When we reflect through the hyperplane orthogonal to $\alpha$, we have to move $\beta-p\alpha$ into $\beta+q\alpha$. Thus
$$ W_{\alpha}(\beta-p\alpha) = \beta-n_{\beta,\alpha}\alpha+p\alpha=\beta+q\alpha $$
and $p-q=n_{\beta,\alpha}$. 
\end{proof}

\begin{lemma}
For a chain of roots in a Dynkin diagram $\dots-\alpha_{1} -\alpha_{2}-\dots-\alpha_{n}-\dots$, the sum of $\alpha_{i}$ is a (positive) root.
\end{lemma}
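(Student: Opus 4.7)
The plan is to argue by induction on $n$, the length of the chain. The base case $n=1$ is immediate since $\alpha_1$ is itself a simple, hence positive, root. For the inductive step, I assume that $\sigma_{n-1} := \alpha_1 + \cdots + \alpha_{n-1}$ is a positive root and aim to show that $\sigma_n := \sigma_{n-1} + \alpha_n$ is also a positive root.

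The key tool is the root-string lemma proved just above: if $\sigma_{n-1} - p\alpha_n,\ \dots,\ \sigma_{n-1} + q\alpha_n$ is the maximal $\alpha_n$-string through $\sigma_{n-1}$, then $p - q = \frac{2(\sigma_{n-1},\alpha_n)}{(\alpha_n,\alpha_n)}$. To conclude that $\sigma_n$ is itself a root, it suffices to establish (a) $p = 0$, i.e.\ $\sigma_{n-1} - \alpha_n$ is not a root, and (b) $(\sigma_{n-1},\alpha_n) < 0$; together these force $q \geq 1$, so $\sigma_n$ lies inside the string and is therefore a root.

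For (a) I would invoke the standard fact that every root, written in the basis of simple roots, has either all non-negative or all non-positive coefficients. Since the chain-vertices $\alpha_1,\ldots,\alpha_n$ are pairwise distinct simple roots (they form a path in the Dynkin diagram), $\sigma_{n-1} - \alpha_n$ has coefficient $+1$ on each of $\alpha_1,\ldots,\alpha_{n-1}$ and coefficient $-1$ on $\alpha_n$, a mixed-sign combination that cannot be a root. For (b) I would use another standard property of simple roots: $(\alpha_i,\alpha_j) \leq 0$ for any two distinct simple roots, with strict inequality precisely when they are adjacent in the Dynkin diagram. Writing $(\sigma_{n-1},\alpha_n) = \sum_{i=1}^{n-1}(\alpha_i,\alpha_n)$, the chain hypothesis guarantees $(\alpha_{n-1},\alpha_n) < 0$ strictly, while all other terms are non-positive, giving $(\sigma_{n-1},\alpha_n) < 0$. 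Positivity of $\sigma_n$ is then automatic, since its coordinates in the simple-root basis are all $+1$.

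The main obstacle I foresee is conceptual rather than computational: one must verify that the argument really depends only on the adjacency hypothesis $(\alpha_i,\alpha_{i+1}) < 0$ for consecutive chain members and on universal sign-properties of simple roots, rather than on any case-specific Lie-algebraic input. Once this is isolated, the induction is short and applies uniformly across all Dynkin diagrams, which is exactly what the exceptional-algebra argument in Section \ref{sec:exceptional} requires.
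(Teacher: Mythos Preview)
Your proposal is correct and follows essentially the same inductive route as the paper: both argue that $\sigma_{n-1}-\alpha_n$ cannot be a root because of mixed signs in the simple-root expansion, and that $(\sigma_{n-1},\alpha_n)<0$ forces the $\alpha_n$-string through $\sigma_{n-1}$ to extend in the positive direction. The only cosmetic difference is that the paper writes $(\sigma_{n-1},\alpha_n)=(\alpha_{n-1},\alpha_n)$ directly (valid for a chain, since $\alpha_n$ is non-adjacent to $\alpha_1,\dots,\alpha_{n-2}$), whereas you bound the full sum $\sum_{i<n}(\alpha_i,\alpha_n)\le(\alpha_{n-1},\alpha_n)<0$; the conclusion is identical.
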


\begin{proof}
Will proceed by induction. Base case $n=1$ is trivial.

Assume $\alpha_{1}+\dots+\alpha_{k-1}$ is a root $\beta$. Notice that $(\beta,\alpha_{k})=(\alpha_{1}+\dots+\alpha_{k-1},\alpha_{k})=(\alpha_{k-1},\alpha_{k})<0$. Therefore the string generated by $\alpha_{k}$ through $\beta$ has a nonzero length. But $\alpha_{1}+\dots+\alpha_{k-1}-\alpha_{k}$ is not a root as it is a combination of the simple roots with mixed signs. Thus $\alpha_{1}+\dots+\alpha_{k}$ is a root.
\end{proof}

From this we conclude, that  $\alpha+\alpha_1+\dots+\alpha_k+ \beta \in R_{+,ndeg}$ as it does not lie in a span of degenerate simple roots. We see that
$$
    (\alpha+\alpha_1+\dots+\alpha_k+ \beta,\lambda)\to \infty
$$ 
Which proves that the denominator goes to $\infty$.

\begin{corollary}
    For any simple compact Lie algebra $\mathfrak{g}$, the normalized character of any irreducible representation $\pi_\lambda$ at a singular element $h_0 \in \mathfrak{g}$ converges to zero as the highest weight $\lambda$ goes to infinity.
\end{corollary}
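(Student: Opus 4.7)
The plan is to assemble the machinery already developed in Sections \ref{sec:sun_proof} and \ref{sec:exceptional}, since the general-position argument there was carried out for all simple Lie algebras simultaneously. I would start from the identity \eqref{eq:effchar}, which after the coset decomposition expresses the character at $h_0$ as a finite sum
\[
\chi_\lambda(h_0) = \frac{1}{\prod_{\alpha \in R_{+,\mathrm{ndeg}}}\bigl(e^{i(\alpha|h_0)/2} - e^{-i(\alpha|h_0)/2}\bigr)} \sum_{b \in W/W_0} \sgn(b)\, e^{i(\eta|b h_0)}\, \dim L_{\lambda'_b}.
\]
Lemma \ref{lemma:weights} guarantees that each $\lambda'_b$ is a valid highest weight of the effective group, so each $\dim L_{\lambda'_b}$ is computed by the Weyl dimension formula \eqref{eq:dimension_formula}.

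Next, I would apply the triangle inequality and divide by $\dimL$. The common product over $R_{+,\mathrm{deg}}$ in the two dimension formulas cancels, yielding the bound \eqref{eq:boundconst},
\[
\frac{|\chi_\lambda(h_0)|}{\dimL} \;\le\; \frac{C(h_0)}{\prod_{\alpha \in R_{+,\mathrm{ndeg}}}(\lambda+\rho|\alpha)},
\]
where $C(h_0)$ depends only on $h_0$ and on the Weyl vectors $\rho$ and $\rho'_b$. The corollary then reduces to proving that the denominator diverges to $\pm\infty$ as $\|\lambda\| \to \infty$, i.e.\ the general-Lie-algebra version of Lemma \ref{lemma:divergence}.

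For this divergence step I would invoke the Dynkin-diagram argument of Section \ref{sec:exceptional}, which applies verbatim to any simple $\mathfrak{g}$ and so bypasses the classical case-by-case proofs tied to the $L_i$ basis. In brief: argue by contradiction and, along a subsequence on which the denominator is bounded, find a simple root $\alpha$ with $(\alpha|\lambda) \to \infty$. If $\alpha$ is already non-degenerate we are done; otherwise, connectivity of the Dynkin diagram yields a shortest chain of simple roots $\alpha, \alpha_1, \ldots, \alpha_k, \beta$ terminating at a non-degenerate $\beta$. The chain-of-roots lemma shows that $\gamma := \alpha + \alpha_1 + \cdots + \alpha_k + \beta$ is a positive root, which lies outside the span of the degenerate simple roots and hence is itself non-degenerate; minimality of the chain forces each intermediate $(\alpha_i|\lambda)$ to be bounded, so $(\gamma|\lambda)\to\infty$, contradicting the assumed boundedness of the product.

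The hardest part of the plan is securing uniformity over all sequences with $\|\lambda\|\to\infty$: one must carefully pass to subsequences along which each simple-root inner product is either bounded or divergent, and ensure that at least one non-degenerate root always captures the divergence regardless of which simple type $\mathfrak{g}$ belongs to. Once this uniform divergence statement is in hand, the corollary follows immediately by combining the bound \eqref{eq:boundconst} with Lemma \ref{lemma:divergence}, and the argument covers classical and exceptional simple algebras alike without further case analysis.
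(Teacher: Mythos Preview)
Your proposal is correct and follows essentially the same approach as the paper: assemble the coset decomposition and effective-dimension formula from Section~\ref{sec:sun_proof}, invoke Lemma~\ref{lemma:weights}, reduce to the bound \eqref{eq:boundconst}, and establish the divergence of the denominator via the Dynkin-diagram argument of Section~\ref{sec:exceptional}. The only difference is cosmetic: you use the connectivity argument uniformly for all simple types, bypassing the separate classical case-checks of Section~\ref{sec:generalization}, which the paper itself notes is a valid unified route.
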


\subsection{Non-Simple Reductive Algebras}
\label{sec:non-simple}
The theorem does not hold for non-simple Lie algebras, as demonstrated by the case of $D_2 \cong \su(2) \oplus \su(2)$. 

The failure for $D_2$ (and the abelian case $D_1$) is a specific instance of a general principle, which we state in the following lemma.

\begin{lemma}
For any reductive, non-simple Lie algebra $\mathfrak{g}$, one can find a non-trivial element $g$ and a sequence of representations $(\pi_k)$ with diverging dimension such that the normalized character does not converge to 0.
\end{lemma}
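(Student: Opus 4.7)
The plan is to exploit the factorization of irreducible representations of a direct-sum reductive algebra. Since $\mathfrak{g}$ is reductive and non-simple, we may write $\mathfrak{g} = \mathfrak{g}_1 \oplus \mathfrak{g}_2$ with both summands non-trivial ideals---concretely, either by peeling off one simple summand from the semisimple part, or by isolating a one-dimensional line of the center, whichever is available. I would arrange the labelling so that $\mathfrak{g}_1$ contains a non-abelian simple summand, which is possible whenever $\mathfrak{g}$ admits \emph{any} sequence of irreducible representations with diverging dimension; this is the only regime in which the statement has content (if $\mathfrak{g}$ is purely abelian the lemma is vacuous).

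On the group side this yields $G = G_1 \times G_2$, and the standard classification of irreducible representations of a compact direct product gives the outer-tensor-product decomposition $\pi_\lambda = \pi_{\lambda_1} \boxtimes \pi_{\lambda_2}$. Both the dimension and the character are then multiplicative,
\begin{equation*}
\dim V_\lambda = \dim V_{\lambda_1}\, \dim V_{\lambda_2}, \qquad \chi_\lambda(g_1,g_2) = \chi_{\lambda_1}(g_1)\, \chi_{\lambda_2}(g_2),
\end{equation*}
so the normalized character splits as the product of the two normalized characters.

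The counterexample is then immediate. I would pick any $g_2 \neq e$ in $G_2$ and set $g := (e_1, g_2) \in G$, which is non-trivial; and define $\pi_k := \pi_{\mu_k} \boxtimes \mathbf{1}_{G_2}$, where $(\mu_k)$ is a sequence of highest weights of $G_1$ with $\dim V_{\mu_k} \to \infty$ and $\mathbf{1}_{G_2}$ is the trivial representation of $G_2$. Substituting into the factorization above gives
\begin{equation*}
\frac{\chi_{\pi_k}(g)}{\dim \pi_k} \;=\; \frac{\chi_{\mu_k}(e_1)}{\dim V_{\mu_k}} \cdot \frac{\chi_{\mathbf{1}}(g_2)}{1} \;=\; 1
\end{equation*}
for every $k$, while $\dim \pi_k = \dim V_{\mu_k} \to \infty$. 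Hence the normalized character is identically equal to $1$ along this sequence and certainly does not tend to zero.

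There is essentially no technical obstacle: the mechanism is just that choosing $g$ to be the identity on the factor where the representation grows, and trivial on the factor where $g$ is non-trivial, forces each of the two normalized characters to equal $1$. The only conceptual remark worth making is that this construction is precisely the obstruction explaining why the semisimple version of Theorem~\ref{cor:critdiv} must require the dimensions of \emph{all} simple components to diverge simultaneously; letting any component lag behind allows exactly the kind of counterexample produced above.
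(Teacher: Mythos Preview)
Your proposal is correct and follows essentially the same mechanism as the paper: split $\mathfrak{g}$ as a direct sum of two nontrivial ideals, let the representation grow only on the factor where $g$ is the identity, and use multiplicativity of characters and dimensions. The paper organizes this into two cases (nonzero center versus semisimple with at least two simple factors) and in the first case pairs the growing semisimple representation with a \emph{nontrivial} abelian irrep, obtaining a nonzero constant $\chi_{V_{ab}}(g_{ab})/\dim V_{ab}$; your version is a bit more streamlined in that it treats both cases at once and, by taking the trivial representation on $G_2$, pins the normalized character to exactly~$1$.
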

\begin{proof}
    For any reductive Lie algebra $\mathfrak{g}$, we have the decomposition
    \[
        \mathfrak{g} = \mathfrak{g}_{ss} \oplus \mathfrak{g}_{ab},
    \]
    where $\mathfrak{g}_{ss}$ is the semisimple part and $\mathfrak{g}_{ab}$ is the abelian part (the center). Irreducible representations of the abelian part are one- or two-dimensional, so it doesn't make sense to even talk about a high-dimension limit for that factor alone. We can therefore take a high-dimension limit if and only if the semisimple part is non-zero.

    First, we will discuss a case when non-abelian part is nonzero. We can consider a sequence of representations
    \[
        V_k = V_{ab} \otimes V_{ss,k},
    \]
    where $V_{ab}$ is an arbitrary one- or two-dimensional non-trivial irreducible representation of the abelian part, and $V_{ss,k}$ is a sequence of irreducible representations of the semisimple part whose dimension $d_k$ goes to infinity. The character of our representation evaluated at an element $g_{ab} = \exp(X_{ab})$ with $X_{ab} \in \mathfrak{g}_{ab}$ is:
    \[
        \chi_{V_k}(g_{ab}) = \chi_{V_{ab}}(g_{ab}) \cdot \chi_{V_{ss,k}}(I) = \chi_{V_{ab}}(g_{ab}) \cdot d_k.
    \]
    The dimension of the full representation is $d_{V_k} = \dim(V_{ab}) \cdot d_k$. Therefore, the normalized character is the constant $\chi_{V_{ab}}(g_{ab})/\dim(V_{ab})$, which can always be chosen to be non-zero.

    If the abelian part is zero, then the semisimple part consist of at least two simple algebras. We can take a sequence of representations that are non-trivial on only one of the simple factors, and the argument proceeds as above.
\end{proof}

The above construction, shows us exactly how the theorem fails for the reductive algebras. We can use this insight to formulate the following corollary


\begin{theorem}
\label{cor:critdiv}
Let $G=G_1\times\ldots\times G_n$ be a decomposition of a  semisimple compact Lie group into simple components $G_i$'s. Let $\lambda=(\lambda^{(1)}, \dots, \lambda^{(n)})$ be the highest weight of an irrep of $G$, where $\lambda^{(i)}$ is the highest weight of an irrep of $G_i$. Then for any fixed $g\in G$ with $g\neq e$,
\begin{equation}
\frac{\chi_\lambda(g)}{d_\lambda}\;\longrightarrow\;0 \quad 
\text{as } \forall i\quad d_{\lambda^{(i)}}\rightarrow\infty.
\end{equation}
\end{theorem}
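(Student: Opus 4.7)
The plan is to reduce the semisimple case directly to the already-established simple case via the tensor-product factorization of irreducible representations of product groups. First, I would recall the standard fact that every finite-dimensional irreducible representation of $G = G_1 \times \cdots \times G_n$ is an external tensor product $V_\lambda \cong V_{\lambda^{(1)}} \boxtimes \cdots \boxtimes V_{\lambda^{(n)}}$, so that both character and dimension factor multiplicatively. Writing $g = (g_1, \dots, g_n)$, this yields
\[
\frac{\chi_\lambda(g)}{d_\lambda} \;=\; \prod_{i=1}^n \frac{\chi_{\lambda^{(i)}}(g_i)}{d_{\lambda^{(i)}}}.
\]

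Next, since $g \neq e$, there is at least one index $i_0$ with $g_{i_0} \neq e_{G_{i_0}}$. I would apply the simple-case result to this coordinate: if $g_{i_0}$ is regular, the normalized character vanishes by the elementary bounded-numerator argument at the start of Section \ref{sec:analysis}; if $g_{i_0}$ is singular, the corollary at the end of Section \ref{sec:exceptional} yields
\[
\frac{\chi_{\lambda^{(i_0)}}(g_{i_0})}{d_{\lambda^{(i_0)}}} \longrightarrow 0 \quad \text{as } d_{\lambda^{(i_0)}} \to \infty.
\]
For every remaining index $i \neq i_0$, the operator $\pi_{\lambda^{(i)}}(g_i)$ is unitary on a space of dimension $d_{\lambda^{(i)}}$, so
\[
\left| \frac{\chi_{\lambda^{(i)}}(g_i)}{d_{\lambda^{(i)}}} \right| \;\le\; 1.
\]
Taking absolute values of the factorized expression, the product is bounded above by the vanishing factor indexed by $i_0$ times uniformly bounded factors, hence tends to $0$ as all $d_{\lambda^{(i)}}$ diverge.

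The proof is essentially formal once the tensor-product decomposition is recorded; all the substantive work has been done in Sections \ref{sec:analysis} and \ref{sec:generalization}, whose output is invoked at a single coordinate. Consequently there is no genuine obstacle: the only point requiring attention is that we cannot simply multiply limits blindly since the other factors need not converge, which is precisely why the universal unitarity bound $|\chi_{\lambda^{(i)}}(g_i)/d_{\lambda^{(i)}}| \le 1$ is invoked to control them. The hypothesis that \emph{every} $d_{\lambda^{(i)}}$ diverges is used only in order to activate the simple-case corollary for the coordinate $i_0$; the theorem would still hold under the weaker assumption that $d_{\lambda^{(i_0)}} \to \infty$ for the coordinate on which $g$ is non-trivial, but the symmetric formulation in the statement is the natural one since we do not know in advance which coordinates of $g$ are non-identity.
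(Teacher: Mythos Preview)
Your proof is correct and matches the paper's (largely implicit) argument: the paper states Theorem~\ref{cor:critdiv} immediately after the failure analysis in Section~\ref{sec:non-simple}, relying on the tensor-product factorization of characters together with the simple-case corollary of Section~\ref{sec:exceptional}, which is exactly the route you spell out. Your explicit use of the unitarity bound $|\chi_{\lambda^{(i)}}(g_i)/d_{\lambda^{(i)}}|\le 1$ to control the remaining factors is the detail the paper leaves to the reader.
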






\section{Application to spectral measure of averaging operators in large irreps}
\label{sec:quantumpart}

Let $G \cong G_1 \times \cdots \times G_n$ be a compact, semisimple Lie group and let 
$\pi_\lambda: G \to U(V_\lambda)$ be its finite-dimensional irreducible unitary representations with highest weight $\lambda$ and 
dimension $d_\lambda=\dim V_\lambda$. It is well known that 
there exist irreducible representations $\pi_{\lambda^{(i)}}:G_i\rightarrow U(V_{\lambda^{(i)}})$ with highest weights $\lambda^{(i)}$ such that
\[
\pi_\lambda \cong \pi_{\lambda^{(1)}} \otimes \cdots \otimes \pi_{\lambda^{(n)}},
\] 
and $d_\lambda=\Pi_{i=1}^n d_{\lambda^{(i)}}$. Fix a finite symmetric\footnote{$\mathcal{S}$ is symmetric iff: $g\in \mathcal{S}\iff g^{-1}\in \mathcal{S}$.} set $\mathcal{S}\subset G$ and assume that it  generates a free dense subgroup $\langle \mathcal{S}\rangle$ of $G$. 
Let $\nu$ be a uniform probability measure supported on $\mathcal{S}$, that is $\forall g\in \mathcal{S}$ we have $\nu(g)=\frac{1}{|\mathcal{S}|}$. 
We consider the averaging operator
\begin{equation}
T_{\nu,\lambda} = \sum_{g\in\mathcal{S}} \nu(g)\, \pi_\lambda(g).
\end{equation}
As $\nu$ is symmetric, the operator is self-adjoint. Note that if $\mu$ is the normalized Haar measure on $G$ then for any nontrivial irrep $\pi_\lambda$ we have $\int_{G}d\mu(g)\pi_\lambda(g)=0$. Following \cite{BANNAI2022108457} we call set $\mathcal{S}$ a $\pi_\lambda$-design if $T_{\nu,\lambda}=0$ and $\delta$-approximate $\pi_\lambda$-design iff $\delta(\lambda):=\|T_{\nu,\lambda}\|\leq\delta$, where $\|\cdot\|$ is the operator norm. We are interested in understanding the spectrum of $T_{\nu,\lambda}$ when $d_\lambda\rightarrow \infty$. To this end we consider the spectral measure $\sigma_{\nu,\lambda}$ of operator $T_{\nu,\lambda}$. Recall that the spectral measure $\sigma_{\nu,\lambda}$ evaluated on an interval $[a,b]\subset\mathbb{R}$ gives:
\begin{equation*}
    \sigma_{\nu,\lambda}([a,b]) := \frac{1}{d_\lambda} (\# \; of \; T_{\nu,\lambda} \; \mathrm{eigenvalues} \;  \mathrm{in} \; [a,b]).
\end{equation*} Its $m$-th moment is
\begin{equation}
\sigma_{\nu,\lambda}^{(m)} =\int x^m d\sigma_{\nu,\lambda}= \frac{1}{d_\lambda}\,\mathrm{Tr}\!\big(T_{\nu,\lambda}^m\big).
\end{equation}
Expanding this trace shows that the moments are weighted sums of characters 
$\chi_\lambda(g)$ normalized by the dimension that is:
\begin{gather}\label{moment}
\sigma_{\nu,\lambda}^{(m)}=\frac{1}{|\mathcal{S}|^m}\sum_{g_1,\ldots , g_m \in \mathcal{S} }\frac{\chi_\lambda\left(g_1\ldots g_m\right)}{d_\lambda}.
\end{gather}
We are interested in the limit of \eqref{moment} when all $d_\lambda^{(i)}$'s are going to infinity. Using Theorem \ref{cor:critdiv} we see that in this limit only words in the generators that reduce to the identity 
contribute. On a free group this happens only through backtracking cancellations, which reproduce the return probabilities of a simple random walk on the infinite $|\mathcal{S}|$-regular tree. Hence the limiting moments coincide with those of the standard Kesten--McKay distribution and the limiting spectral measure is the symmetric 
Kesten--McKay law supported on the interval 
\([-\,\delta_{\mathrm{opt}},\,\delta_{\mathrm{opt}}]\), where
\begin{equation}
\delta_{\mathrm{opt}} = \frac{2\sqrt{s-1}}{s}.
\end{equation}
In the non-symmetric case, one obtains the singular-value version supported 
on \([0,\delta_{\mathrm{opt}}]\).
The largest nontrivial eigenvalue in modulus of $T_{\nu,\lambda}$ converges to 
$\delta_{\mathrm{opt}}$. Thus the spectral gap (defined as $1-\delta_{\mathrm{opt}}$) 
cannot exceed $1-2\sqrt{s-1}/s$ in the large-dimension limit. 
This bound is universal: it depends only on the number of generators $s$, not 
on the specific compact Lie group $G$.

The behavior of the spectral measure is directly related to the problem of 
generating approximate unitary $t$-designs. Random walks on compact groups with distribution $\nu$ converge toward the Haar measure, $\mu$, and the rate of 
convergence is controlled by the spectral gap of the averaging operator. A larger gap means faster mixing and more efficient generation of pseudorandom unitaries.

Our analysis shows that in the large-irrep limit, the normalized characters vanish away from the identity, so the effective spectral statistics reduce to 
the universal Kesten--McKay law. This implies that the spectral radius of the 
nontrivial part of the averaging operator cannot drop below 
$\delta_{\text{opt}}=2\sqrt{s-1}/s$. Equivalently, the spectral gap cannot 
exceed $1-\delta_{\text{opt}}$, no matter which compact semisimple group $G$ we start from.

In practical terms, this means that the ''speed limit'' for producing unitary 
$t$-designs by local random walks is universal: the choice of symmetry group 
cannot yield a better asymptotic spectral gap than the Kesten--McKay bound. 
The only parameter that matters in the limit is the number of generators $s$ 
in the random walk. This connects the algebraic structure of irreps to the 
efficiency of pseudorandomness generation in quantum information tasks.

\section{Conclusions}
\label{sec:conclusion}

In this work, we analyzed the behavior of normalized characters as the dimension of the representation approaches the infinity. We derived a criterion for the vanishing of this limit. The strictly algebraic proof reveals the connection between the structure of the centralizer of the singular element and the asymptotics of the normalized trace. Thanks to this method we were able to precisely delineate  the area of validity of the theorem for the compact, reductive groups.

Our results turn out to have far-reaching implications far beyond the representation theory. Due to their generality, they let us exclude the factor of the kind of the underlying symmetry group of the quantum system in the process of the generation of the pseudorandomness. This gave us a green light to establish generally holding bound that is totally independent of the inner algebraic structure and depends solely on the number of generators we used for construction of our $t$-design. Looking forward, this universal bound provides a strict foundation for exploring the broader dynamics of quantum complexity, such as the extremal jumps in circuit complexity generated by random Hamiltonians \cite{Kotowski2025}.

\section*{Acknowledgments}
The authors thank the anonymous referees for their careful reading of the manuscript and their insightful comments, which contributed to the improvement of this work. This research was funded by the National Science Centre, Poland under the grant OPUS: UMO2020/37/B/ST2/02478.

\bibliographystyle{unsrt}
\bibliography{references}

@article{dulianRandomMatrixModel2024,
  title = {A {Random Matrix Model} for {Random Approximate} T-{Designs}},
  author = {Dulian, Piotr and Sawicki, Adam},
  year = {2024},
  journal = {IEEE Trans. Inf. Theory},
  volume = {70},
  number = {4},
  pages = {2637--2654},
  issn = {1557-9654},
  doi = {10.1109/TIT.2024.3367787},
  url = {https://ieeexplore.ieee.org/document/10440373},
  urldate = {2025-08-02}
}

@misc{etingof2025boundsasymptoticcharacterssimple,
      title={Bounds for asymptotic characters of simple Lie groups}, 
      author={Pavel Etingof and Eric Rains},
      year={2025},
      eprint={2405.10341},
      archivePrefix={arXiv},
      primaryClass={math.RT},
      url={https://arxiv.org/abs/2405.10341}, 
}

@book{kirillovIntroductionLieGroups2017,
  title = {An Introduction to {Lie} Groups and {Lie} Algebras},
  author = {Kirillov, Alexander},
  year = {2017},
  series = {Cambridge Studies in Advanced Mathematics},
  edition = {First paperback edition},
  number = {113},
  publisher = {Cambridge University Press},
  address = {Cambridge, United Kingdom},
  isbn = {978-0-521-88969-8 978-1-316-61410-5},
  langid = {english}
}

@article{SizeCharactersCompact1991,
  title = {The Size of Characters of Compact {Lie} Groups},
  author = {K. E. Hare},
  year = {1998},
  journal = {Studia Math.},
  volume = {129},
  number = {1},
  pages = {1--18},
  issn = {0039-3223}
}

@article{EffLearning2020,
    author = {H. Huang and R. Kueng and J. Preskill},
    title = {Predicting Many Properties of a Quantum System from Very Few Measurements},
    doi = {10.1038/s41567-020-0932-7},
	url = {https://doi.org/10.1038/s41567-020-0932-7},
	year = {2020},
	month = {6},
	publisher = {Springer Science and Business Media {LLC}},
	volume = {16},
	number = {10},
	pages = {1050--1057},
	journal = {Nat. Phys.},
    archivePrefix = {arXiv},
    eprint = {2002.08953}
}

@Article{BHH2016,
  author        = {{Brand{\~a}o}, F. G.~S.~L. and {Harrow}, A. W. and {Horodecki}, M.},
  title         = {{Local Random Quantum Circuits are Approximate Polynomial-Designs}},
  journal       = {Commun. Math. Phys.},
  year          = {2016},
  volume        = {346},
  number        = {2},
  pages         = {397-434},
  month         = {sep},
  archiveprefix = {arXiv},
  doi           = {10.1007/s00220-016-2706-8},
  eprint        = {1208.0692}
}

@article{Gambetta2014,
    author = {J. M. Epstein and A. W. Cross and E. Magesan and J. M. Gambetta},
    title = {Investigating the limits of randomized benchmarking protocols},
    doi = {10.1103/physreva.89.062321},
	url = {https://doi.org/10.1103/physreva.89.062321},
	year = {2014},
	publisher = {American Physical Society ({APS})},
	volume = {89},
	pages = {062321},
 	journal = {Phys. Rev. A},
    archivePrefix = {arXiv},
    eprint = {1308.2928}
}

@ARTICLE{Decoupl2013,
    author = {O. Szehr and F. Dupuis and M. Tomamichel and R. Renner},
    title = "{Decoupling with unitary approximate two-designs}",
    journal = {New J. Phys.},
    year = "2013",
    month = "may",
    volume = {15},
    number = {5},
    pages = {053022},
    doi = {10.1088/1367-2630/15/5/053022},
    archivePrefix = {arXiv},
    eprint = {1109.4348}
}

@ARTICLE{StateDiscrimination2005,
    author = {Radhakrishnan, J. and Rötteler, M. and Sen, P.},
    title = "{Random measurement bases, quantum state distinction and applications to the hidden subgroup problem}",
    journal = {Algorithmica},
    volume = {55},
    year = "2009",
    month = "Oct",
    pages = {490–516},
    archivePrefix = {arXiv},
    eprint = {quant-ph/0512085}
}

@ARTICLE{ChaosDesign2017,
    author = {{Roberts}, Daniel A. and {Yoshida}, Beni},
    title = "{Chaos and complexity by design}",
    journal = {J. High Energy Phys.},
    year = "2017",
    month = "Apr",
    volume = {2017},
    number = {4},
    pages = {121},
    doi = {10.1007/JHEP04(2017)121},
    archivePrefix = {arXiv},
    eprint = {1610.04903}
}

@article{HArrowHasstings2008,
author = {Hastings, M. B. and Harrow, A. W.},
title = {Classical and Quantum Tensor Product Expanders},
year = {2009},
issue_date = {March 2009},
publisher = {Rinton Press, Incorporated},
address = {Paramus, NJ},
volume = {9},
number = {3},
issn = {1533-7146},
journal = {Quantum Inf. Comput.},
month = {mar},
pages = {336–360},
numpages = {25},
archivePrefix = {arXiv},
eprint = {0804.0011}
}

@ARTICLE{Harrow2018,
    author = {{Harrow}, A. and {Mehraban}, S.},
    title = "{Approximate unitary $t$-designs by short random quantum circuits using nearest-neighbor and long-range gates}",
    journal = {arXiv e-prints},
    year = "2018",
    month = "sep",
    archivePrefix = {arXiv},
    eprint = {1809.06957}
}

@ARTICLE{Qhomeopathy2020,
    author = {{Haferkamp}, J. and {Montealegre-Mora}, F. and {Heinrich}, M. and {Eisert}, J. and {Gross}, D. and {Roth}, I.},
    title = "{Quantum homeopathy works: Efficient unitary designs with a system-size independent number of non-Clifford gates}",
    journal = {arXiv e-prints},
    year = 2020,
    month = {feb},
    archivePrefix = {arXiv},
    eprint = {2002.09524}
}

@ARTICLE{InfTransmission2009,
    author = {{Abeyesinghe}, A. and {Devetak}, I. and {Hayden}, P. and {Winter}, A.},
    title = "{The mother of all protocols: restructuring quantum information's family tree}",
    journal = {Proc. R. Soc. A},
    year = "2009",
    month = "jun",
    volume = {465},
    number = {2108},
    pages = {2537-2563},
    doi = {10.1098/rspa.2009.0202},
    archivePrefix = {arXiv},
    eprint = {quant-ph/0606225}
}

@article{Yoshifumi1,
  title = {Efficient Quantum Pseudorandomness with Nearly Time-Independent Hamiltonian Dynamics},
  author = {Nakata, Y. and Hirche, C. and Koashi, M. and Winter, A.},
  journal = {Phys. Rev. X},
  volume = {7},
  issue = {2},
  pages = {021006},
  numpages = {20},
  year = {2017},
  month = {Apr},
  publisher = {American Physical Society},
  doi = {10.1103/PhysRevX.7.021006},
  url = {https://link.aps.org/doi/10.1103/PhysRevX.7.021006},
  archivePrefix = {arXiv},
  eprint = {1609.07021}
}

@article{Yoshifumi2,
  title = {Quantum Circuits for Exact Unitary $t$-Designs and Applications to Higher-Order Randomized Benchmarking},
  author = {Nakata, Y. and Zhao, D. and Okuda, T. and Bannai, E. and Suzuki, Y. and Tamiya, S. and Heya, K. and Yan, Z. and Zuo, K. and Tamate, S. and Tabuchi, Y. and Nakamura, Y.},
  journal = {PRX Quantum},
  volume = {2},
  issue = {3},
  pages = {030339},
  numpages = {35},
  year = {2021},
  month = {sep},
  publisher = {American Physical Society},
  doi = {10.1103/PRXQuantum.2.030339},
  url = {https://link.aps.org/doi/10.1103/PRXQuantum.2.030339},
  archivePrefix = {arXiv},
  eprint = {2102.12617}
}

@article{jonas2,
	Author = {Haferkamp, J. and Faist, P. and Kothakonda, N. B. T. and Eisert, J. and Yunger Halpern, N.},
	Journal = {Nat. Phys.},
	Number = {5},
	Pages = {528--532},
	Title = {Linear growth of quantum circuit complexity},
	Volume = {18},
	Year = {2022},
    archivePrefix = {arXiv},
    eprint = {2106.05305}
 }

@book{Suskind2018,
    author = {{Susskind}, L.},
    title = "{Three Lectures on Complexity and Black Holes}",
    publisher = {Springer Cham},
    isbn = {978-3-030-45109-7},
    year = "2020",
    archivePrefix = {arXiv},
    eprint = {1810.11563}
}

@article{MO1,
  doi = {10.48550/ARXIV.2205.09734},
  url = {https://arxiv.org/abs/2205.09734},
  author = {Oszmaniec, M. and Horodecki, M. and Hunter-Jones, N.},
  title = {Saturation and recurrence of quantum complexity in random quantum circuits},
  publisher = {arXiv},
  journal = {arXiv e-prints},
  archivePrefix = {arXiv},
  eprint = {2205.09734},
  year = {2022},
  copyright = {Creative Commons Attribution 4.0 International}
}

@ARTICLE{nets,
  author={Oszmaniec, M. and Sawicki, A. and Horodecki, M.},
  journal={IEEE Trans. Inf. Theory},
  title={Epsilon-Nets, Unitary Designs, and Random Quantum Circuits},
  year={2022},
  volume={68},
  number={2},
  pages={989-1015},
  doi={10.1109/TIT.2021.3128110},
    archivePrefix = {arXiv},
    eprint = {2007.10885}
}

@article{Leone2021quantumchaosis,
  doi = {10.22331/q-2021-05-04-453},
  url = {https://doi.org/10.22331/q-2021-05-04-453},
  title = {Quantum {Chaos} is {Quantum}},
  author = {Leone, L. and Oliviero, S. F. E. and Zhou, Y. and Hamma, A.},
  journal = {Quantum},
  issn = {2521-327X},
  publisher = {{Verein zur F{\"{o}}rderung des Open Access Publizierens in den Quantenwissenschaften}},
  volume = {5},
  pages = {453},
  month = {may},
  year = {2021},
  archivePrefix = {arXiv},
  eprint = {2102.08406}
}

@article{OLIVIERO2021127721,
title = {Transitions in entanglement complexity in random quantum circuits by measurements},
journal = {Phys. Lett. A},
volume = {418},
pages = {127721},
year = {2021},
issn = {0375-9601},
doi = {https://doi.org/10.1016/j.physleta.2021.127721},
url = {https://www.sciencedirect.com/science/article/pii/S0375960121005855},
author = {S. F. E. Oliviero and L. Leone and A. Hamma},
archivePrefix = {arXiv},
eprint = {2103.07481}
}

@article{Sawicki22,
  title = {Universality verification for a set of quantum gates},
  author = {Sawicki, A. and Mattioli, L. and Zimbor\'as, Z.},
  journal = {Phys. Rev. A},
  volume = {105},
  issue = {5},
  pages = {052602},
  numpages = {6},
  year = {2022},
  month = {may},
  publisher = {American Physical Society},
  doi = {10.1103/PhysRevA.105.052602},
  url = {https://link.aps.org/doi/10.1103/PhysRevA.105.052602},
  archivePrefix = {arXiv},
    eprint = {2111.03862}
}

@article{Haferkamp2022randomquantum,
  doi = {10.22331/q-2022-09-08-795},
  url = {https://doi.org/10.22331/q-2022-09-08-795},
  title = {Random quantum circuits are approximate unitary {$t$}-designs in depth {$O(nt^{5+o(1)})$}},
  author = {Haferkamp, J.},
  journal = {Quantum},
  issn = {2521-327X},
  publisher = {{Verein zur F{\"{o}}rderung des Open Access Publizierens in den Quantenwissenschaften}},
  volume = {6},
  pages = {795},
  month = {sep},
  year = {2022}
}

@article{PhysRevA.104.022417,
  title = {Improved spectral gaps for random quantum circuits: Large local dimensions and all-to-all interactions},
  author = {Haferkamp, J. and Hunter-Jones, N.},
  journal = {Phys. Rev. A},
  volume = {104},
  issue = {2},
  pages = {022417},
  numpages = {18},
  year = {2021},
  month = {aug},
  publisher = {American Physical Society},
  doi = {10.1103/PhysRevA.104.022417},
  url = {https://link.aps.org/doi/10.1103/PhysRevA.104.022417},
  archivePrefix = {arXiv},
  eprint = {2012.05259}
}

@article{BANNAI2022108457,
title = {Explicit construction of exact unitary designs},
journal = {Adv. Math.},
volume = {405},
pages = {108457},
year = {2022},
issn = {0001-8708},
doi = {https://doi.org/10.1016/j.aim.2022.108457},
url = {https://www.sciencedirect.com/science/article/pii/S0001870822002742},
author = {Eiichi Bannai and Yoshifumi Nakata and Takayuki Okuda and Da Zhao},
keywords = {Unitary design, Explicit construction, Gelfand pair, Spherical design},
abstract = {In this paper, we first provide an inductive construction of designs on compact groups G from those on subgroups K when (G,K) is a Gelfand pair. We then use the construction to provide explicit constructions of unitary t-designs in the unitary group U(d) for all t and d. To the best of our knowledge, the explicit constructions were so far known only for very special cases. Here, explicit construction means that the entries of the unitary matrices are given by the values of elementary functions at the root of given polynomials. We will compare our constructions with previously known ones in the case of unitary 4-designs in U(4). We finally remark that this method can be applied to the orthogonal groups O(d) and thus provides another explicit construction of spherical t-design on the sphere Sd−1 in the d dimensional Euclidean space by the induction on d.}
}

@inproceedings{HH09,
    author="Harrow, A. W. and Low, R. A.",
    editor="Dinur, Irit and Jansen, Klaus and Naor, Joseph and Rolim, Jos{\'e}",
    title="Efficient Quantum Tensor Product Expanders and k-Designs",
    booktitle="Approximation, Randomization, and Combinatorial Optimization. Algorithms and Techniques",
    year="2009",
    publisher="Springer Berlin Heidelberg",
    address="Berlin, Heidelberg",
    pages="548--561",
    isbn="978-3-642-03685-9",
    archivePrefix = {arXiv},
    eprint = {0811.2597}
}

@inproceedings{Chen2024,
  author    = {Chen, C.-F. and Docter, J. and Xu, M. and Bouland, A. and Brand\~ao, F. G. S. L. and Hayden, P.},
  title     = {Efficient Unitary Designs from Random Sums and Permutations},
  booktitle = {2024 IEEE 65th Annual Symposium on Foundations of Computer Science (FOCS)},
  pages     = {476--484},
  year      = {2024},
  doi       = {10.1109/FOCS61266.2024.00037}
}

@article{Kotowski2025,
  author  = {Kotowski, M. and Oszmaniec, M. and Horodecki, M.},
  title   = {Extremal jumps of circuit complexity of unitary evolutions generated by random Hamiltonians},
  journal = {J. High Energy Phys.},
  volume  = {2025},
  pages   = {116},
  year    = {2025},
  doi     = {10.1007/JHEP01(2025)116}
}

@article{Metger2024,
  author        = {Metger, T. and Poremba, A. and Sinha, M. and Yuen, H.},
  title         = {Simple constructions of linear-depth $t$-designs and pseudorandom unitaries},
  journal       = {arXiv e-prints},
  year          = {2024},
  archivePrefix = {arXiv},
  eprint        = {2404.12647}
}

@article{Nakata2025,
  author  = {Nakata, Y. and Takeuchi, Y. and Kliesch, M. and Darmawan, A.},
  title   = {On Computational Complexity of Unitary and State Design Properties},
  journal = {PRX Quantum},
  volume  = {6},
  pages   = {030345},
  year    = {2025},
  doi     = {10.1103/PRXQuantum.6.030345}
}

\end{document}